\documentclass[11pt]{article}

\usepackage[T1]{fontenc}
\usepackage{lmodern}
\usepackage{microtype}
\usepackage[letterpaper,margin=1in]{geometry}
\usepackage{amsmath,amssymb,amsthm,mathtools,mathrsfs}
\usepackage{enumitem}
\usepackage[hidelinks]{hyperref}

\numberwithin{equation}{section}

\newtheorem{theorem}{Theorem}[section]
\newtheorem{proposition}[theorem]{Proposition}
\newtheorem{lemma}[theorem]{Lemma}
\newtheorem{corollary}[theorem]{Corollary}
\newtheorem{remark}[theorem]{Remark}
\newtheorem{definition}[theorem]{Definition}

\newcommand{\C}{\mathbb C}
\newcommand{\R}{\mathbb R}
\newcommand{\cR}{\mathcal R}
\newcommand{\cI}{\mathcal I}
\newcommand{\CT}{\operatorname{CT}}
\newcommand{\Int}{\operatorname{Int}}
\newcommand{\Aut}{\operatorname{Aut}}
\newcommand{\Ad}{\operatorname{Ad}}
\newcommand{\ad}{\operatorname{ad}}
\newcommand{\Span}{\operatorname{span}}
\newcommand{\dd}{\,\mathrm d}
\newcommand{\ip}[2]{\langle #1,#2\rangle}

\title{The Mathieu Property for Compact Connected Lie Groups}
\author{Christopher D. Long\\[2pt]
\small\texttt{galizur@gmail.com}}
\date{July 30, 2026}

\newcommand{\mscclassification}{22E30, 22E46, 43A75, 16D70}
\newcommand{\paperkeywords}{Mathieu conjecture, Mathieu--Zhao subspace, compact Lie group,
Haar measure, root subgroup, Hopf coordinates, Jacobian conjecture}

\hypersetup{
  pdftitle={The Mathieu Property for Compact Connected Lie Groups},
  pdfauthor={Christopher D. Long},
  pdfsubject={Classification of the Mathieu property for compact connected Lie groups},
  pdfkeywords={Mathieu conjecture, Mathieu--Zhao subspace, compact Lie group, Haar measure, root subgroup, Hopf coordinates, Jacobian conjecture}
}

\begin{document}

\maketitle

\begin{abstract}
Let $G$ be a compact connected Lie group, let $\cR(G)$ denote its algebra of
representative functions, and let $\cI_G(f)=\int_G f(g)\,\dd g$ be normalized
Haar integration.  We prove that $\ker \cI_G$ is a Mathieu--Zhao subspace of
$\cR(G)$ if and only if $G$ is a torus.  More strongly, for every nonabelian
compact connected Lie group $G$ we construct $A,P,Q\in\cR(G)$, with $A\geq0$
and $A\not\equiv0$, such that all pure moments of $P$ vanish and the marked
moments satisfy an exact Pascal-row identity
\[
 \cI_G(Q^sP^m)
 =c_m\binom{m-1}{s-1}\cI_G(A^{4m+s})>0
 \qquad(1\leq s\leq m),
\]
where $c_m=4^m(m!)^2/(2m+1)!$; the same moments vanish for $s>m$.
The construction begins with a homogeneous, phase-balanced polynomial pair on
$\C^2$.  An exact coefficient identity on the Hopf sphere $S^3$ is transferred
by orbit averaging to every compactly supported $SU(2)$-invariant measure on
$\C^2$.  A highest-weight representation and a visible simple-root doublet
produce such a measure on every compact simple Lie group.  Phase balance gives
descent through every central quotient, and pullback from an adjoint simple
quotient handles arbitrary nonabelian compact connected groups.  The proof is
direct and independent of the implication from the Mathieu conjecture to the
Jacobian conjecture.  The torus direction is the theorem of Duistermaat and
van der Kallen.
\end{abstract}

\begin{center}
\begin{minipage}{0.92\textwidth}
\small
\noindent\textit{2020 Mathematics Subject Classification.} \mscclassification

\smallskip
\noindent\textit{Keywords.} \paperkeywords
\end{minipage}
\end{center}

\section{Introduction}

Mathieu formulated in \cite{Mathieu1997} a moment conjecture for finite-type
functions on compact connected Lie groups.  In the language of Mathieu
subspaces introduced and developed by Zhao \cite{Zhao2012}, it asks whether
the kernel of normalized Haar integration is a Mathieu subspace of the
algebra of representative functions.

Duistermaat and van der Kallen proved the conjecture for tori
\cite{DuistermaatVanderKallen1998}.  Dings and Koelink studied the first
nonabelian case through matrix coefficients \cite{DingsKoelink2015}.
M\"uger--Tuset and Zwart subsequently used explicit Euler- and KAK-type
integration formulas to reduce the conjecture for $SU(2)$, $SU(N)$, $SO(N)$,
$Sp(N)$, $G_2$, and ultimately general compact connected Lie groups to moment
or convex-support conjectures on abelian parameter spaces
\cite{MugerTuset2024,MugerTuset2025,Zwart2023,Zwart2024,Zwart2025}.

There are now two independent sources of counterexamples.  First, an explicit
counterexample for $SU(2)$ was given in \cite{Long2026}.  Second, in July 2026
Alp\"oge announced an explicit three-dimensional counterexample to the
Jacobian conjecture, credited in the announcement to Fable
\cite{Alpoge2026}.  The announced map has constant Jacobian determinant $-2$
and is generically three-to-one \cite{SecretBloggingSeminar2026}.  Xena
reported that Lezeau had manually formalized the example and opened a pull
request to the Formal Conjectures repository \cite{Xena2026}; that pull
request was merged on July 26, 2026 \cite{Lezeau2026}.  Postcomposing with the
diagonal target automorphism $\operatorname{diag}(-\tfrac12,1,1)$ normalizes
the Jacobian determinant to $1$ while preserving noninjectivity.  Adjoining
$N-3$ identity coordinates then gives a counterexample in every dimension
$N\geq3$.  Since the Mathieu conjecture for $SU(N)$ implies the Jacobian
conjecture in dimension $N$ \cite{Mathieu1997,ZwartMathieu2026}, it follows
that the Mathieu conjecture fails for $SU(N)$ for every $N\geq3$.  These
results do not by themselves classify arbitrary compact connected groups.
The purpose of the present paper is to give a direct, uniform classification,
independent of the Jacobian route and without case-by-case analysis by Lie
type.

Let $\cR(G)$ be the algebra of representative functions on a compact group
$G$, equivalently the finite linear combinations of matrix coefficients of
finite-dimensional continuous complex representations.  Let
\[
 \cI_G:\cR(G)\longrightarrow\C,
 \qquad
 \cI_G(f)=\int_G f(g)\,\dd g,
\]
where Haar measure is normalized to have total mass one.  We say that $G$ has
the \emph{Mathieu property} if $\ker\cI_G$ is a Mathieu subspace of
$\cR(G)$.

Our main result is the following complete classification.

\begin{theorem}[Classification theorem]\label{thm:classification}
Let $G$ be a compact connected Lie group.  Then the following conditions are
equivalent:
\begin{enumerate}[label=\textup{(\roman*)}]
\item $\ker\cI_G$ is a Mathieu--Zhao subspace of $\cR(G)$;
\item $G$ is abelian;
\item $G$ is a torus.
\end{enumerate}
\end{theorem}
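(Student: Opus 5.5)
The equivalence (ii)$\Leftrightarrow$(iii) is standard: a compact connected abelian Lie group is a torus, since its exponential map is a surjective homomorphism from its Lie algebra with discrete kernel. The implication (iii)$\Rightarrow$(i) is the theorem of Duistermaat and van der Kallen \cite{DuistermaatVanderKallen1998}. For a torus $T$, $\cR(T)$ is the Laurent polynomial algebra and $\cI_T$ is the constant-term functional $\CT$, so their result says precisely that $\ker\CT$ is a Mathieu subspace. All the work is therefore in (i)$\Rightarrow$(ii). There I argue by contraposition: for nonabelian $G$ I exhibit $P,Q\in\cR(G)$ with $\cI_G(P^m)=0$ for all $m\ge1$ but $\cI_G(QP^m)\neq0$ for infinitely many $m$. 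This violates the Mathieu--Zhao condition, which would force $\cI_G(QP^m)=0$ for all large $m$.

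\emph{Step 1 (model on $\C^2$).} I construct a homogeneous, phase-balanced pair of polynomials $P_0,Q_0$ in $z,\bar z$ on $\C^2$. Phase balance means that each monomial has equal weight under the diagonal circle $z\mapsto e^{i\theta}z$, so $P_0,Q_0$ are invariant under it; I also arrange invariance under the center $\{\pm1\}$ of $SU(2)$. On the Hopf sphere $S^3$, I compute the moments $\int_{S^3}Q_0^sP_0^m$ exactly in Hopf coordinates. The pure moments $\int P_0^m$ should vanish because $P_0$ carries a nonzero charge under a circle subgroup. For $s=1$, the marked moments should give $c_m=4^m(m!)^2/(2m+1)!$, which is a Beta integral coming from integrating $|z_1|^{2a}|z_2|^{2b}$ over $S^3$. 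Homogeneity then turns the sphere identity into an identity for every $SU(2)$-invariant compactly supported measure $\mu$ on $\C^2$: average over $SU(2)$-orbits, use polar decomposition, and factor out a radial moment $\int|z|^{N}\dd\mu>0$.

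\emph{Step 2 (from $G$ to such a measure).} Reduce to $G$ compact simple: a nonabelian $G$ has a nontrivial semisimple part, so it surjects onto an adjoint simple group $G_{\mathrm{ad}}$, and pullback along a surjection preserves both $\cR$ and normalized Haar integrals. Take an irreducible highest-weight representation $V$ of $G$ with highest weight vector $v$, a simple root $\alpha$ with $\ip{\lambda}{\alpha^\vee}=1$, and the root $SU(2)_\alpha$. Then $v$ and $F_\alpha v$ span a doublet $W\cong\C^2$. Define $\Phi\colon G\to\C^2$ by $\Phi(g)=\pi_W(gv)$, where $\pi_W$ is orthogonal projection. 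Its coordinates are matrix coefficients, so $P=P_0\circ\Phi$ and $Q=Q_0\circ\Phi$ lie in $\cR(G)$. Since $\pi_W$ commutes with $SU(2)_\alpha$, left multiplication by $SU(2)_\alpha$ and left invariance of Haar measure make $\Phi_*(\mathrm{Haar})$ an $SU(2)$-invariant compactly supported measure on $\C^2$. It is nonzero away from the origin because $\Phi(e)=v\neq0$. Step 1 then gives the Pascal-row identity with $A=|\Phi|^2\ge0$, nonzero at $e$, and positivity of $\cI_G(A^{N})$. Phase balance ensures that $P,Q$ are invariant under the center acting by scalars on $V$, so they descend to every central quotient. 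This handles simple groups that are not simply connected, and in particular the adjoint quotient used above.

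I expect Step 1 to be the main obstacle. The difficulty is finding an explicit pair $(P_0,Q_0)$ that is phase-balanced and homogeneous and whose mixed moments come out in exact closed form, $\binom{m-1}{s-1}c_m$, rather than merely being nonzero. I would first try $P_0=(\bar z_1 z_2)^2$, or a charged variant with a conjugate twist such as $P_0=z_1\bar z_2\,(|z_1|^2-|z_2|^2)$, together with $Q_0$ chosen as a compensating charge like $\bar z_1 z_2|z|^2$, and then reduce the moments to one-variable Beta integrals in $t=|z_1|^2$. For the theorem itself only nonvanishing for $s=1$ is needed, so a cruder choice suffices if the exact identity proves elusive.
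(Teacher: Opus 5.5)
\textbf{Verdict: genuine gap.} Your overall architecture matches the paper's: a sphere identity, radial transfer by homogeneity, the highest-weight-one root doublet with projection $\Phi$, phase-balanced descent through the center, pullback from an adjoint simple quotient, and Duistermaat--van der Kallen for tori. However, you do not supply Step~1, which carries all the content. Worse, the mechanism you propose for it cannot work.

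Suppose $P_0$ is a weight vector of nonzero weight $c$ for a circle $S\subset U(2)$, for instance $S=\{\operatorname{diag}(e^{i\theta},1)\}$. Both of your candidates are of this kind.
\begin{itemize}
\item Every $SU(2)$-invariant measure $\mu$ on $\C^2$ is normalized surface measure on each sphere, so it is $S$-invariant.
\item Decompose any polynomial $Q_0$ into its finitely many $S$-weight components. The integral $\int Q_0P_0^m\,\dd\mu$ can be nonzero only if some component has weight $-mc$. This happens for at most finitely many $m$.
\item Hence $\int Q_0P_0^m\,\dd\mu=0$ for all large $m$, and such a pair never violates the Mathieu condition.
\end{itemize}
This is exactly the torus mechanism, under which the property survives. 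Concretely, $P_0=z_1\overline{z_2}(|z_1|^2-|z_2|^2)$ has weight $1$ and your compensating $Q_0=\overline{z_1}z_2|z|^2$ has weight $-1$. So $Q_0P_0^m$ has weight $m-1$, and its integral vanishes for every $m\geq2$. Your other candidate, $(\overline{z_1}z_2)^2$, fails the same way. No ``cruder choice'' of this type gives nonvanishing even for $s=1$ at infinitely many $m$.

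What you need is a $P_0$ whose $S$-weights have $0$ in their convex hull, whose pure moments cancel through the non-abelian $t$-integral rather than through a charge, and whose cancellation you can actually prove. The paper does this as follows.
\begin{itemize}
\item It uses the Hopf quadratics $a,\tau,u=2z_0\overline{z_1},v=2z_1\overline{z_0}$ with $\mathscr P=(a+u)\bigl(a^2v-(2a+u)\tau^2\bigr)$ and $\mathscr Q=u$. The $u$-weights of $\mathscr P$ are $\{-1,0,1,2\}$.
\item On $S^3$, the relation $uv+\tau^2=1$ gives the defect-one identity $up=(1+u)\bigl(1-(1+u)^2\tau^2\bigr)$.
\item Extracting the constant term in $u$ and substituting $s=(1+X)t$ gives $\int_{S^3}H(q)p^m\,\dd\sigma=[X^m]H(X)(1+X)^{m-1}J_m(1+X)$, where $J_m(y)=\int_0^y(1-s^2)^m\,\dd s$.
\item Since $J_m(1+X)\equiv J_m(1)=c_m\pmod{X^{m+1}}$, this equals $c_m[X^m]H(X)(1+X)^{m-1}$.
\item The pure moments vanish because $(1+X)^{m-1}$ has degree $m-1$. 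Taking $H=X^s$ gives $c_m\binom{m-1}{s-1}$.
\end{itemize}

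Without such a pair and a proof of its moment identities, (i)$\Rightarrow$(ii) is not established. The rest of your argument goes through once the pair is in hand. One small point: the doublet must be built on the simply connected cover and then descended, as your descent remark implicitly does, since the fundamental representation need not factor through the adjoint group.
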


The nonabelian direction is substantially stronger than the negation of
eventual vanishing.  We use the convention
$\binom{r}{k}=0$ when $k<0$ or $k>r$.

\begin{theorem}[Uniform nonabelian marker tower]\label{thm:uniform-nonabelian}
For every nonabelian compact connected Lie group $G$, there exist
$A,P,Q\in\cR(G)$ such that $A(g)\geq0$ for every $g\in G$, $A\not\equiv0$, and,
for every integer $m\geq1$,
\begin{equation}\label{eq:uniform-pure}
 \cI_G(P^m)=0.
\end{equation}
For every integer $s\geq1$ one has
\begin{equation}\label{eq:uniform-marker-tower}
 \cI_G(Q^sP^m)
 =c_m\binom{m-1}{s-1}\cI_G(A^{4m+s}),
 \qquad
 c_m=\frac{4^m(m!)^2}{(2m+1)!}>0.
\end{equation}
Consequently, the marked moment is strictly positive for $1\leq s\leq m$ and
vanishes for $s>m$.  In particular, the same fixed multiplier $Q$ detects
every positive power of $P$.
\end{theorem}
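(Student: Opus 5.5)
The construction follows the route sketched in the abstract, in four stages: an identity on the Hopf sphere, transfer to invariant measures on $\C^2$, realization inside a compact simple group, and descent/pullback to arbitrary nonabelian groups.

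\emph{Stage 1 (the sphere identity).} On $\C^2$ with coordinates $(z,w)$ I look for polynomials $p,q,a$ with the following properties.
\begin{itemize}
\item $a=|z|^2+|w|^2$, or a suitable power of it, so that $a\ge0$.
\item $p$ and $q$ are homogeneous.
\item $p$ and $q$ are phase-balanced: they are invariant under the central circle $(z,w)\mapsto(e^{i\theta}z,e^{i\theta}w)$, so each monomial has equal holomorphic and antiholomorphic degree.
\item On $S^3$ one has $\int p^m=0$ and $\int q^sp^m=c_m\binom{m-1}{s-1}$.
\end{itemize}
A natural first attempt is $p=z\bar w$-type nilpotent data. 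I would choose $p$ built from $z\bar w$ so that pure powers have nonzero $U(1)^2$-weight and therefore integrate to zero. The marker $q$ should be a weight-compensating term such as $q=\bar z w+(\text{a term of weight }0)$. Expanding $(\bar z w+r)^s(z\bar w)^{\cdot}$ by the multinomial theorem leaves only the weight-zero terms. The binomial $\binom{m-1}{s-1}$ should then arise as a stars-and-bars count of compositions of $m$ into $s$ parts.

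The constant comes from the Hopf-coordinate integral
\[
\int_{S^3}|z|^{2k}|w|^{2l}=\frac{k!\,l!}{(k+l+1)!}.
\]
This gives $\int_{S^3}|z|^{2m}|w|^{2m}=(m!)^2/(2m+1)!$, and the extra factor $4^m$ comes from the normalization of $p$. I expect pinning down the exact $p,q$ that make the sum collapse to a single Pascal entry to be the delicate calculation.

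\emph{Stage 2 (transfer to invariant measures).} By homogeneity, for any compactly supported $SU(2)$-invariant measure $\mu$ on $\C^2$ I get
\[
\int q^sp^m\,d\mu=c_m\binom{m-1}{s-1}\int a^{\deg}\,d\mu.
\]
To see this, disintegrate $\mu$ along radii and average over $SU(2)$-orbits, which are spheres. The degrees $4m+s$ in the statement fix the homogeneity degrees: $p$ has degree $4$ and $q$ has degree $1$ in $a$-units. Taking $A=a\circ\Phi$ for the relevant map $\Phi$ makes the right side $\cI_G(A^{4m+s})$.

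\emph{Stage 3 (realization in a compact simple group).} For $G$ compact simple, take a highest-weight representation $V$ and a simple root $\alpha$. Restrict a matrix coefficient map to the two-dimensional subspace spanned by the highest weight vector $v$ and $f_\alpha v$, which is a doublet for the root $SU(2)_\alpha$ when $\langle\lambda,\alpha^\vee\rangle=1$. Define $\Phi:G\to\C^2$ by $g\mapsto(\langle gv,u_1\rangle,\langle gv,u_2\rangle)$. By right invariance of Haar measure under $SU(2)_\alpha$, the push-forward $\Phi_*(\text{Haar})$ is an $SU(2)$-invariant compactly supported measure. The functions $P=p\circ\Phi$, $Q=q\circ\Phi$, $A=a\circ\Phi$ are representative functions, and $A\not\equiv0$. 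Making sure that the pushforward is genuinely invariant under the full $SU(2)$ (and not only a subgroup), with the correct choice of side and vectors, is the main obstacle. I would try left translation by $SU(2)_\alpha$ acting on coordinates via the doublet.

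\emph{Stage 4 (reduction to arbitrary nonabelian groups).} Phase balance makes $P,Q,A$ invariant under the center acting by scalars, so they descend to central quotients; in particular they exist on the adjoint group. For a general nonabelian compact connected $G$, pick a simple factor of $\mathfrak g$ and the surjection $G\to G_{\mathrm{ad}}$ onto that adjoint simple quotient, then pull back. Haar measure pushes forward to Haar measure, so all the identities persist. Positivity for $1\le s\le m$ follows because $A\ge0$ and $A\not\equiv0$ imply $\cI_G(A^{k})>0$.
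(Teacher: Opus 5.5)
Your Stages 2--4 follow the paper's route: orbit averaging over spheres using homogeneity, the doublet $W=\Span_\C\{e_0,F_\alpha e_0\}$ for a fundamental weight with $\Phi(g)=\Pi_W\rho(g)e_0$, central descent by phase balance, and pullback from an adjoint simple quotient. Your fallback in Stage 3 is the correct one. Since $W^\perp$ is also $K_\alpha$-invariant, $\Pi_W$ commutes with $\rho(k)$, so $\Phi(kg)=\rho(k)\Phi(g)$, and \emph{left} invariance of Haar measure makes $\Phi_*(\dd g)$ $SU(2)$-invariant.

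The genuine gap is Stage 1. It carries the entire content of the theorem, and the mechanism you sketch there cannot work. Suppose the pure moments vanish because $p^m$ has no weight-zero part under $(z,w)\mapsto(e^{i\alpha}z,e^{i\beta}w)$. Then for generic fixed $t=|z|^2-|w|^2$, $p$ is a Laurent polynomial in $e^{i(\alpha-\beta)}$ all of whose powers have zero constant term. By the one-variable case of Duistermaat--van der Kallen, its exponents all have one sign, say $\geq1$. For any fixed $q$ with exponents $\geq-d$, every exponent of $qp^m$ is then positive once $m>d$, so $\int_{S^3}qp^m\,\dd\sigma=0$ for all large $m$. This is exactly the torus behaviour, which never yields a Mathieu counterexample. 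Your concrete choice shows it: with $p$ of weight $+1$ and $q=\bar zw+r$, $\int q^sp^m$ can be nonzero only when $s\geq m$, which is the reverse of the Pascal row. For $u=2z\bar w$ and $v=2\bar zw$ one gets $\int_{S^3}v^su^m\,\dd\sigma=c_m\delta_{sm}$. That accounts for your constant $c_m$ but not for the tower, and no compositions count can repair it.

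The missing idea is that the average of $p^m$ over $(\alpha,\beta)$ at fixed $t$ must be a \emph{nonzero} function of $t$ whose $t$-integral cancels. This forces $p$ to have weights of both signs. In the notation $z=z_0$, $w=z_1$, the paper takes $a=|z_0|^2+|z_1|^2$, $\tau=|z_0|^2-|z_1|^2$, $u=2z_0\overline{z_1}$, $v=2z_1\overline{z_0}$, and sets $\mathscr P=(a+u)\bigl(a^2v-(2a+u)\tau^2\bigr)$, $\mathscr Q=u$. These are phase-balanced and homogeneous of degrees $8$ and $2$, and the $u$-exponents of $\mathscr P$ are $-1,0,1,2$.

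On $S^3$ the Hopf relation $uv=1-\tau^2$ gives $up=(1+u)\bigl(1-(1+u)^2t^2\bigr)$. Hence the $(\alpha,\beta)$-average of $H(q)p^m$ is $[X^m]H(X)(1+X)^m\bigl(1-(1+X)^2t^2\bigr)^m$. For $m=1$, $H=1$ this is $1-3t^2$, which is nonzero but integrates to zero over $[0,1]$. Integrating over $t$ (uniform on $[-1,1]$, with even integrand) gives $[X^m]H(X)(1+X)^{m-1}J_m(1+X)$, where $J_m(y)=\int_0^y(1-s^2)^m\,\dd s$. Since $J_m(1+X)\equiv J_m(1)=c_m\pmod{X^{m+1}}$, the value is $c_m[X^m]H(X)(1+X)^{m-1}$. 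This is zero for $H=1$ and equals $c_m\binom{m-1}{s-1}$ for $H=X^s$. Without an explicit pair of this kind, Stages 2--4 have nothing to transport.
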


The proof has four steps.  First, we establish an exact coefficient formula
for a homogeneous, phase-balanced polynomial pair on the Hopf sphere $S^3$.
Second, we transfer the full coefficient identity by orbit averaging to
arbitrary compactly supported $SU(2)$-invariant measures on $\C^2$.  Third, a
fundamental representation and a highest-weight-one simple-root doublet
produce such a measure on every simply connected compact simple group; phase
balance makes the construction descend through every central quotient.
Finally, every nonabelian compact connected group has an adjoint compact
simple quotient.
For the defining representations of $SU(n)$ and $Sp(n)$, Proposition
\ref{prop:classical-closed-forms} evaluates the entire marker tower in closed
beta-function form.

The standard facts concerning representative functions, compact Lie groups,
root subgroups, highest weights, and the associated $\mathfrak{sl}_2$-modules
used below may be found in \cite{BrockerTomDieck1985,Hall2015}.  All moment
calculations and all transfer arguments specific to the present construction
are proved explicitly.

\section{Mathieu subspaces and functoriality}

\begin{definition}\label{def:mathieu-subspace}
Let $A$ be a commutative unital $\C$-algebra.  A linear subspace $M\subseteq A$
is a \emph{Mathieu subspace} if, whenever $f^m\in M$ for every $m\geq1$, one
has
\[
 hf^m\in M
 \qquad\text{for every fixed }h\in A\text{ and all sufficiently large }m.
\]
For a compact group $G$, we say that $G$ has the Mathieu property if
$\ker\cI_G$ is a Mathieu subspace of $\cR(G)$.
\end{definition}

For compact groups, representative functions are precisely the finite-type
functions appearing in Mathieu's original formulation.  We record the
closure and functoriality properties used later.

\begin{lemma}[Representative functions]\label{lem:representative-algebra}
Let $G$ be a compact group.
\begin{enumerate}[label=\textup{(\roman*)}]
\item $\cR(G)$ is a unital $*$-subalgebra of $C(G)$.
\item If $f$ and $h$ are matrix coefficients of representations $\rho$ and
$\sigma$, respectively, then $fh$ is a matrix coefficient of
$\rho\otimes\sigma$, and $\overline f$ is a matrix coefficient of the
conjugate representation $\overline\rho$.
\item If $\pi:G\to H$ is a continuous homomorphism of compact groups, then
$f\circ\pi\in\cR(G)$ for every $f\in\cR(H)$.
\end{enumerate}
\end{lemma}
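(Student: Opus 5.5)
The plan is to work directly from the definition of $\cR(G)$ as the linear span of matrix coefficients $g\mapsto\ip{\rho(g)v}{w}$, where $\rho$ ranges over finite-dimensional continuous representations on Hilbert spaces. Compactness lets us assume without loss of generality that each such representation is unitary: average an inner product over Haar measure. I would prove (ii) first, since (i) is built from it, and then handle (iii) separately.

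For (ii), let $f(g)=\ip{\rho(g)v}{w}$ and $h(g)=\ip{\sigma(g)v'}{w'}$. On $V\otimes V'$ take the inner product determined by $\ip{a\otimes b}{c\otimes d}=\ip ac\ip bd$. Then
\[
\ip{(\rho\otimes\sigma)(g)(v\otimes v')}{w\otimes w'}=f(g)h(g),
\]
so $fh$ is a matrix coefficient of $\rho\otimes\sigma$. For the conjugate, let $\overline V$ be $V$ with conjugated scalar multiplication and conjugated inner product, and let $\overline\rho(g)=\rho(g)$ as a map of sets. This $\overline\rho$ is again a continuous linear representation, and
\[
\ip{\overline\rho(g)v}{w}_{\overline V}=\overline{\ip{\rho(g)v}{w}}=\overline f(g).
\]

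For (i), continuity of each matrix coefficient follows from continuity of $\rho$. The constant function $1$ is the matrix coefficient of the trivial representation, so $\cR(G)$ is unital. It is a linear subspace by definition, being a span. Closure under products follows from bilinearity together with (ii): a product of two finite sums expands into a finite sum of products of coefficients, and each of those is a coefficient of a tensor product. Closure under conjugation follows from the conjugate-linearity of $f\mapsto\overline f$ together with (ii). If one prefers sums of coefficients to be single coefficients, direct sums $\rho\oplus\sigma$ do this, but that is not needed.

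For (iii), observe that $\rho\circ\pi$ is a continuous finite-dimensional representation of $G$, and that $\ip{\rho(\pi(g))v}{w}$ is precisely $(f\circ\pi)(g)$. Composition with $\pi$ is linear, so (iii) extends from single coefficients to all of $\cR(H)$.

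No step here is a serious obstacle. The only point needing care is checking that $\overline\rho$ is a genuine $\C$-linear representation on $\overline V$, with the inner-product conventions arranged so that the conjugation identity holds on the nose.
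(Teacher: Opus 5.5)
Your proposal is correct and follows essentially the same route as the paper: the unit comes from the trivial representation, products from the tensor-product coefficient identity, conjugates from $\overline\rho$, pullbacks from $\rho\circ\pi$, and everything extends to $\cR(G)$ by taking finite linear combinations. Your explicit construction of $\overline V$ and your remark on continuity of coefficients fill in details the paper leaves implicit; the unitarization step is harmless but not needed.
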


\begin{proof}
The constant function $1$ is the coefficient of the trivial representation.
If
$f(g)=\ip{\rho(g)v}{\lambda}$ and
$h(g)=\ip{\sigma(g)w}{\mu}$, then
\[
 f(g)h(g)
 =\ip{(\rho\otimes\sigma)(g)(v\otimes w)}{\lambda\otimes\mu}.
\]
Complex conjugation gives a coefficient of $\overline\rho$.  Finally,
$f\circ\pi$ is a coefficient of $\rho\circ\pi$ whenever $f$ is a coefficient
of $\rho$.  Taking finite linear combinations proves the assertions.
\end{proof}

\begin{lemma}[Haar pushforward and pullback]\label{lem:haar-pullback}
Let $\pi:G\twoheadrightarrow H$ be a continuous surjective homomorphism of
compact groups.  Then the pushforward of normalized Haar measure on $G$ is
normalized Haar measure on $H$.  Consequently,
\begin{equation}\label{eq:haar-pullback}
 \cI_G(f\circ\pi)=\cI_H(f)
 \qquad(f\in C(H)).
\end{equation}
In particular, a counterexample to the Mathieu property on $H$ pulls back to
a counterexample on $G$.
\end{lemma}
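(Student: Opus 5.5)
The plan is to identify the pushforward $\pi_*\mu_G$ by its invariance and then read off the integral identity and the counterexample transfer directly. Let $\mu_G$ denote normalized Haar measure on $G$ and set $\nu=\pi_*\mu_G$, the Borel probability measure on $H$ given by $\nu(E)=\mu_G(\pi^{-1}(E))$. Since $\pi$ is continuous, $\nu$ is a regular Borel probability measure on the compact group $H$ (regularity is automatic for finite Borel measures on compact metrizable spaces, and in general follows from the Riesz representation $f\mapsto\int_G f\circ\pi\,\dd\mu_G$ on $C(H)$). The aim is to show that $\nu$ is left invariant. Uniqueness of normalized Haar measure then gives $\nu=\mu_H$.

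For left invariance, fix $h\in H$. By surjectivity choose $g_0\in G$ with $\pi(g_0)=h$. For $f\in C(H)$,
\[
 \int_H f(hx)\,\dd\nu(x)=\int_G f(\pi(g_0)\pi(g))\,\dd g=\int_G f(\pi(g_0g))\,\dd g=\int_G f(\pi(g))\,\dd g=\int_H f\,\dd\nu,
\]
using that $\pi$ is a homomorphism and that $\mu_G$ is left invariant. Thus the positive normalized functional $f\mapsto\int f\,\dd\nu$ on $C(H)$ is left invariant, so it coincides with Haar integration on $H$. This gives \eqref{eq:haar-pullback}, since $\int_G f\circ\pi\,\dd\mu_G=\int_H f\,\dd\nu$ by the change of variables formula for pushforward measures. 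The only point needing any care is that surjectivity is essential, since it is what lets every translate on $H$ lift. I do not expect a real obstacle here.

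For the final assertion, suppose $f,h\in\cR(H)$ witness failure of the Mathieu property on $H$. That means $\cI_H(f^m)=0$ for all $m\geq1$, while $\cI_H(hf^m)\neq0$ for infinitely many $m$. By Lemma~\ref{lem:representative-algebra}(iii), the functions $F=f\circ\pi$ and $\tilde h=h\circ\pi$ lie in $\cR(G)$. Composition with $\pi$ is multiplicative, so $F^m=f^m\circ\pi$ and $\tilde hF^m=(hf^m)\circ\pi$. Applying \eqref{eq:haar-pullback} gives $\cI_G(F^m)=\cI_H(f^m)=0$ for every $m$ and $\cI_G(\tilde hF^m)=\cI_H(hf^m)$. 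The latter is nonzero for infinitely many $m$, so $\ker\cI_G$ is not a Mathieu subspace. The same computation transfers any exact moment identities, such as those in Theorem~\ref{thm:uniform-nonabelian}, verbatim from $H$ to $G$, and this is how the lemma will be used later.
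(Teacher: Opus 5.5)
Your proof is correct and follows the paper's argument: the pushforward of Haar measure is left invariant, so uniqueness of normalized Haar measure gives the integral identity, and Lemma~\ref{lem:representative-algebra}(iii) shows that representative functions pull back to representative functions. You simply make explicit the steps the paper leaves implicit, namely lifting $h=\pi(g_0)$ via surjectivity, the change-of-variables formula, and the multiplicativity $(hf^m)\circ\pi=(h\circ\pi)(f\circ\pi)^m$.
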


\begin{proof}
The pushforward measure is a left-invariant probability measure on $H$, hence
is normalized Haar measure by uniqueness.  Equation
\eqref{eq:haar-pullback} follows.  Lemma \ref{lem:representative-algebra}
shows that representative functions pull back to representative functions.
\end{proof}

\section{The universal Hopf pair}

For a polynomial or formal power series $F(X)$, we write $[X^m]F(X)$ for
the coefficient of $X^m$; for a Laurent polynomial in $u$, we write
$\CT_u$ for its constant term.

For $z=(z_0,z_1)\in\C^2$, define the phase-balanced quadratic polynomials
\begin{equation}\label{eq:hopf-coordinates}
 \begin{alignedat}{2}
 a&=|z_0|^2+|z_1|^2,
 &\qquad \tau&=|z_0|^2-|z_1|^2,\\
 u&=2z_0\overline{z_1},
 &v&=2z_1\overline{z_0}.
 \end{alignedat}
\end{equation}
They satisfy the Hopf relation
\begin{equation}\label{eq:hopf-relation}
 a^2=uv+\tau^2.
\end{equation}
Define
\begin{equation}\label{eq:universal-pair}
 \boxed{
 \mathscr P=(a+u)\bigl(a^2v-(2a+u)\tau^2\bigr),
 \qquad
 \mathscr Q=u.}
\end{equation}
Both are polynomials in $z_0,z_1,\overline{z_0},\overline{z_1}$.  They are
invariant under the common phase action
$(z_0,z_1)\mapsto(\lambda z_0,\lambda z_1)$ for $|\lambda|=1$, and their real
homogeneities are
\begin{equation}\label{eq:homogeneity}
 \mathscr P(rz)=r^8\mathscr P(z),
 \qquad
 \mathscr Q(rz)=r^2\mathscr Q(z)
 \qquad(r\geq0).
\end{equation}

The two structural constraints serve different later steps.  Homogeneity makes
each orbit-sphere moment a single radial power and is essential to the transfer
argument in Section~\ref{sec:radial-transfer}; phase balance makes all
quantities invariant under scalar central characters and is essential to the
descent argument in Lemma~\ref{lem:center-descent}.  Thus an inhomogeneous
counterexample on the unit sphere need not survive this mechanism.

Let $S^3=\{z\in\C^2:a(z)=1\}$ and let $\sigma$ be normalized surface measure.
Write
\[
 p=\mathscr P|_{S^3},
 \qquad
 q=\mathscr Q|_{S^3}.
\]
For $m\geq1$, set
\begin{equation}\label{eq:cm-definition}
 c_m=\int_0^1(1-s^2)^m\,\dd s
 =\frac12 B\!\left(\frac12,m+1\right)
 =\frac{4^m(m!)^2}{(2m+1)!}
 =\frac{2^m m!}{(2m+1)!!}.
\end{equation}
Here $B$ denotes Euler's beta function.  In particular, $c_m>0$.

\begin{theorem}[Hopf coefficient identity]\label{thm:hopf-coefficient}
For every integer $m\geq1$ and every polynomial $H\in\C[X]$,
\begin{equation}\label{eq:hopf-coefficient}
 \int_{S^3}H(q(z))p(z)^m\,\dd\sigma(z)
 =c_m[X^m]\,H(X)(1+X)^{m-1}.
\end{equation}
\end{theorem}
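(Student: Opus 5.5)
The plan is to reduce everything to an explicit two-variable integral in Hopf coordinates, where the key observation is that on $S^3$ the polynomial $p$ factors through $u$ and $\tau^2$ alone. On $S^3$ we have $a=1$, and \eqref{eq:hopf-relation} gives $uv=1-\tau^2$ with $v=\overline u$. Parametrize $S^3$ by $\tau=|z_0|^2-|z_1|^2\in[-1,1]$ and a phase $w=e^{i\theta}$, so that
\[
 u=\sqrt{1-\tau^2}\,w,
 \qquad
 v=\sqrt{1-\tau^2}\,\overline w .
\]
By Archimedes' theorem, $\tau$ is uniformly distributed on $[-1,1]$ under $\sigma$. Since $z_0\overline{z_1}$ rotates under $(z_0,z_1)\mapsto(z_0,e^{-i\theta}z_1)$, which preserves $\sigma$ and $\tau$, the phase $w$ is uniform and independent of $\tau$. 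I would verify this via the standard parametrization $z_0=\cos\phi\,e^{i\alpha}$, $z_1=\sin\phi\,e^{i\beta}$.

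The first computational step is the factorization. Put $t=\tau^2$ and substitute $v=(1-t)/u$, which is valid on the set $u\neq0$ of full measure. Then
\[
 p=(1+u)\Bigl(\frac{1-t}{u}-(2+u)t\Bigr)
 =\frac{1+u}{u}\bigl(1-t(1+u)^2\bigr).
\]
Hence $H(q)p^m=H(u)(1+u)^m u^{-m}\bigl(1-t(1+u)^2\bigr)^m$ is a Laurent polynomial in $u$ with coefficients polynomial in $t$. Averaging over $w$ with $\tau$ fixed kills every monomial $u^n=(1-t)^{n/2}w^n$ with $n\neq0$ and leaves the $u^0$ coefficient unchanged, with no radial factor. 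Therefore
\[
 \int_{S^3}H(q)p^m\,\dd\sigma
 =\frac12\int_{-1}^1[X^m]\,H(X)(1+X)^m\bigl(1-\tau^2(1+X)^2\bigr)^m\dd\tau
 =[X^m]\,H(X)(1+X)^m J(X),
\]
where
\[
 J(X)=\int_0^1\bigl(1-\tau^2(1+X)^2\bigr)^m\dd\tau .
\]
Interchanging $[X^m]$ with the $\tau$-integral is harmless because everything is a polynomial in $X$ with polynomial coefficients in $\tau$.

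The remaining step is to show $J(X)\equiv c_m/(1+X)\pmod{X^{m+1}}$ as power series. For real $X>-1$, substitute $s=\tau(1+X)$ to get
\[
 (1+X)J(X)=\int_0^{1+X}(1-s^2)^m\dd s=c_m+\int_1^{1+X}(1-s^2)^m\dd s .
\]
The last term is a polynomial in $X$. Because $(1-s^2)^m$ vanishes to order $m$ at $s=1$, this polynomial is divisible by $X^{m+1}$. Both sides are polynomials in $X$ agreeing on an interval, so the identity holds formally. Multiplying by $(1+X)^{-1}$, a unit in $\C[[X]]$, gives $J\equiv c_m(1+X)^{-1}$ modulo $X^{m+1}$. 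Then $[X^m]H(X)(1+X)^mJ(X)=c_m[X^m]H(X)(1+X)^{m-1}$, which is \eqref{eq:hopf-coefficient}.

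I expect the main obstacle to be the bookkeeping in the phase-averaging step. One must justify that $u^{-1}$ may be used, either on a full-measure set or equivalently by rewriting in terms of $v$. One must also check that the surviving $u^0$ term carries no leftover power of $\sqrt{1-\tau^2}$. Both follow because the $w$-degree of $u^n$ is exactly $n$.
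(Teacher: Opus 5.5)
Your proposal is correct and follows the paper's proof essentially step for step: the same Hopf-coordinate reduction to a uniform $\tau$ with an independent uniform phase, the same identity $up=(1+u)\bigl(1-(1+u)^2\tau^2\bigr)$ followed by constant-term extraction in $u$, the same substitution $s=(1+X)\tau$, and the same congruence $\int_0^{1+X}(1-s^2)^m\dd s\equiv c_m \pmod{X^{m+1}}$. The differences are only cosmetic. You invert $1+X$ in $\C[[X]]$ where the paper absorbs one factor into $(1+X)^m$, and you justify the formal substitution by noting that two polynomials agree on $X>-1$.
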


\begin{proof}
Use Hopf coordinates
\begin{equation}\label{eq:hopf-parametrization}
 z_0=\sqrt{\frac{1+t}{2}}\,e^{i\alpha},
 \qquad
 z_1=\sqrt{\frac{1-t}{2}}\,e^{i\beta},
\end{equation}
where $-1\leq t\leq1$ and $0\leq\alpha,\beta<2\pi$.  Outside the two endpoint
circles of measure zero, these coordinates give
\begin{equation}\label{eq:s3-measure}
 \dd\sigma=\frac{1}{8\pi^2}\,\dd t\,\dd\alpha\,\dd\beta,
 \qquad
 \tau=t,
 \qquad
 u=\sqrt{1-t^2}\,e^{i(\alpha-\beta)}.
\end{equation}
On $S^3$, relation \eqref{eq:hopf-relation} becomes $uv+\tau^2=1$.
Multiplying the definition of $p$ by $u$ gives the polynomial identity
\begin{equation}\label{eq:no-division-identity}
 up=(1+u)\bigl(1-(1+u)^2\tau^2\bigr).
\end{equation}
Thus, away from the measure-zero set $u=0$,
\begin{equation}\label{eq:rational-p}
 p=u^{-1}(1+u)\bigl(1-(1+u)^2t^2\bigr).
\end{equation}
The preceding polynomial identity is the global justification for this
localization.  In particular, $p$ is the restriction of a polynomial to the
compact sphere and is bounded; the apparent pole in \eqref{eq:rational-p} is
used only off the null set $u=0$ and creates no integrability issue.  Thus all
integrals below are absolutely convergent.

For fixed $-1<t<1$, write $u=r_t e^{i\theta}$ with
$r_t=\sqrt{1-t^2}$.  Averaging over the phase
$\theta=\alpha-\beta$ extracts the Laurent constant term in $u$: if
$F(u)=\sum_j b_j(t)u^j$, then
$(2\pi)^{-1}\int_0^{2\pi}F(r_t e^{i\theta})\,\dd\theta=b_0(t)$.
The $(\alpha,\beta)$-integration contributes one half of this normalized phase
average.  The resulting constant term is even in $t$, so the factor $1/2$
cancels the factor $2$ obtained by restricting from $[-1,1]$ to $[0,1]$.
Consequently,
\begin{align}
 \int_{S^3}H(q)p^m\,\dd\sigma
 &=\frac12\int_{-1}^1\CT_u\!\left(
 H(u)u^{-m}(1+u)^m
 \bigl(1-(1+u)^2t^2\bigr)^m\right)\dd t\notag\\
 &=\int_0^1\CT_u\!\left(
 H(u)u^{-m}(1+u)^m
 \bigl(1-(1+u)^2t^2\bigr)^m\right)\dd t\notag\\
 &=[X^m]H(X)(1+X)^m
 \int_0^1\bigl(1-(1+X)^2t^2\bigr)^m\,\dd t.
 \label{eq:ct-step}
\end{align}
Define the odd polynomial
\[
 J_m(y)=\int_0^y(1-s^2)^m\,\dd s.
\]
The substitution $s=(1+X)t$ in \eqref{eq:ct-step} gives
\begin{equation}\label{eq:J-step}
 \int_{S^3}H(q)p^m\,\dd\sigma
 =[X^m]H(X)(1+X)^{m-1}J_m(1+X).
\end{equation}
Moreover,
\begin{align}
 J_m(1+X)-J_m(1)
 &=\int_0^X\bigl(1-(1+y)^2\bigr)^m\,\dd y\notag\\
 &=\int_0^X\bigl(-y(2+y)\bigr)^m\,\dd y
 \in X^{m+1}\C[X].
 \label{eq:J-congruence}
\end{align}
Since $H(X)(1+X)^{m-1}$ has no negative powers, the term in
$X^{m+1}\C[X]$ contributes nothing to the coefficient of $X^m$ in
\eqref{eq:J-step}.  Hence $J_m(1+X)$ may be replaced by
$J_m(1)=c_m$, proving \eqref{eq:hopf-coefficient}.
\end{proof}

\begin{corollary}[Pure moments and Pascal marker tower]
\label{cor:sphere-marker-tower}
For every $m\geq1$,
\begin{equation}\label{eq:sphere-pure}
 \int_{S^3}p^m\,\dd\sigma=0.
\end{equation}
For every integer $s\geq1$,
\begin{equation}\label{eq:sphere-marked}
 \int_{S^3}q^sp^m\,\dd\sigma
 =c_m\binom{m-1}{s-1}.
\end{equation}
Thus the marked moments are exactly row $m-1$ of Pascal's triangle: they are
positive for $1\leq s\leq m$ and vanish for $s>m$.  In particular,
\begin{equation}\label{eq:sphere-first-marker}
 \int_{S^3}qp^m\,\dd\sigma=c_m>0.
\end{equation}
\end{corollary}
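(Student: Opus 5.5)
The plan is to specialize Theorem \ref{thm:hopf-coefficient} to monomials $H(X)=X^s$ for $s\geq0$ and read off coefficients; no further integration is needed.

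For the pure moments \eqref{eq:sphere-pure}, take $H\equiv1$. The right side of \eqref{eq:hopf-coefficient} becomes $c_m[X^m](1+X)^{m-1}$. Since $(1+X)^{m-1}$ has degree $m-1<m$, this coefficient is $0$ for every $m\geq1$.

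For the marked moments \eqref{eq:sphere-marked}, take $H(X)=X^s$ with $s\geq1$. Then
\[
 [X^m]\,X^s(1+X)^{m-1}=[X^{m-s}](1+X)^{m-1}=\binom{m-1}{m-s}=\binom{m-1}{s-1}.
\]
This uses the stated convention: binomials vanish outside the range. That covers $s>m$, where $m-s<0$. It also covers the symmetric reindexing, which is valid because $0\leq m-s\leq m-1$ exactly when $1\leq s\leq m$. Multiplying by $c_m$ gives \eqref{eq:sphere-marked}.

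Positivity for $1\leq s\leq m$ follows because $c_m>0$ by \eqref{eq:cm-definition} and $\binom{m-1}{s-1}\geq1$ in that range. Vanishing for $s>m$ is immediate from the convention. Setting $s=1$ gives $\binom{m-1}{0}=1$, which is \eqref{eq:sphere-first-marker}.

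There is no real obstacle here. The only point needing care is the index bookkeeping at the edges, namely $s=m$ and $s>m$, under the binomial convention. All analytic content, including absolute convergence, has already been handled in the proof of Theorem \ref{thm:hopf-coefficient}.
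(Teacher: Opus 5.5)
Your proposal is correct and matches the paper's proof: specialize the Hopf coefficient identity to $H=1$ (degree $m-1<m$ gives zero) and to $H(X)=X^s$, then read off $[X^{m-s}](1+X)^{m-1}=\binom{m-1}{s-1}$, with the zero convention covering $s>m$.
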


\begin{proof}
Set $H=1$ in Theorem \ref{thm:hopf-coefficient}.  Since
$(1+X)^{m-1}$ has degree $m-1$, the coefficient of $X^m$ is zero.  For
$H(X)=X^s$, the required coefficient is
\[
 [X^{m-s}](1+X)^{m-1}=\binom{m-1}{s-1},
\]
with the stated zero convention when $s>m$.
\end{proof}

\begin{remark}[Defect-one coefficient law]\label{rem:defect-one}
Equation \eqref{eq:hopf-coefficient} is a one-step boundary law for a Pascal
row.  The unmarked extraction lies one degree beyond
$(1+X)^{m-1}$ and vanishes, while multiplication by $q^s$ moves the extraction
to the coefficient of $X^{m-s}$.  No additional finite-difference identity is
needed in the proof.
\end{remark}

\section{Transfer to radial measures}\label{sec:radial-transfer}

We identify $SU(2)$ with its defining unitary action on $\C^2$.  This action is
transitive on every sphere centered at the origin: for a unit vector
$\zeta=(\zeta_0,\zeta_1)$, the matrix
\[
 \begin{pmatrix}\zeta_0&-\overline{\zeta_1}\\
 \zeta_1&\overline{\zeta_0}\end{pmatrix}\in SU(2)
\]
sends $(1,0)^{\mathsf T}$ to $\zeta$.

\begin{theorem}[Universal radial transfer]\label{thm:radial-transfer}
Let $\mu$ be a compactly supported finite positive Borel measure on $\C^2$
that is invariant under the defining action of $SU(2)$.  Then, for every
$m\geq1$,
\begin{equation}\label{eq:radial-pure}
 \int_{\C^2}\mathscr P(z)^m\,\dd\mu(z)=0.
\end{equation}
For every integer $s\geq1$,
\begin{equation}\label{eq:radial-marked}
 \int_{\C^2}\mathscr Q(z)^s\mathscr P(z)^m\,\dd\mu(z)
 =c_m\binom{m-1}{s-1}
 \int_{\C^2}a(z)^{4m+s}\,\dd\mu(z).
\end{equation}
Thus the marked moment vanishes for $s>m$.  If
$\mu(\C^2\setminus\{0\})>0$, then
\[
 \int_{\C^2}a(z)^{4m+s}\,\dd\mu(z)>0
 \qquad(m,s\geq1),
\]
and consequently
\begin{equation}\label{eq:radial-positive}
 \int_{\C^2}\mathscr Q(z)^s\mathscr P(z)^m\,\dd\mu(z)>0
 \qquad(m\geq1,\ 1\leq s\leq m).
\end{equation}
\end{theorem}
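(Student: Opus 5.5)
The plan is to reduce \eqref{eq:radial-pure} and \eqref{eq:radial-marked} to Corollary~\ref{cor:sphere-marker-tower} by averaging over $SU(2)$-orbits and then applying homogeneity.

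First, since $\mu$ is $SU(2)$-invariant, for every bounded Borel $F$ we have $\int F\,\dd\mu=\int_{\C^2}\int_{SU(2)}F(kz)\,\dd k\,\dd\mu(z)$. This follows from Fubini and the invariance $\int F(kz)\,\dd\mu(z)=\int F\,\dd\mu$ for each $k$. Everything is bounded because $\mu$ has compact support and the integrands are polynomials.

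Second, we identify the inner average. Write $z=r\zeta$ with $r=|z|$ and $\zeta\in S^3$. By the transitivity recorded above, $k\mapsto k\zeta$ pushes Haar measure on $SU(2)$ forward to an $SU(2)$-invariant probability measure on $S^3$. Since $SU(2)$ acts transitively on $S^3\cong SU(2)$ with trivial stabilizers, this pushforward is the unique invariant probability measure, which is $\sigma$. (Equivalently, normalized surface measure is $U(2)$-invariant, so invariance plus uniqueness gives the identification.)

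Third, we apply homogeneity \eqref{eq:homogeneity}. For $z\neq0$,
\[
\int_{SU(2)}\mathscr Q(kz)^s\mathscr P(kz)^m\,\dd k
 =r^{2s+8m}\int_{S^3}q^sp^m\,\dd\sigma
 =c_m\binom{m-1}{s-1}a(z)^{4m+s},
\]
using $a(z)=r^2$ and \eqref{eq:sphere-marked}. At $z=0$ both sides vanish, since $4m+s\geq1$. The case of \eqref{eq:radial-pure} is the same computation with $H=1$, using \eqref{eq:sphere-pure}. Integrating against $\mu$ then gives both identities. The vanishing for $s>m$ follows from the binomial convention.

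Finally, for positivity: $a^{4m+s}>0$ on $\C^2\setminus\{0\}$, so its integral is strictly positive when $\mu(\C^2\setminus\{0\})>0$. Since $c_m>0$ and $\binom{m-1}{s-1}\geq1$ for $1\leq s\leq m$, this yields \eqref{eq:radial-positive}.

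The only step needing care is the second one, the identification of the orbit pushforward with $\sigma$. It comes down to uniqueness of invariant probability measures on the homogeneous space $S^3$, which is routine. No real obstacle is expected.
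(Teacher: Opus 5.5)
Your proposal is correct and follows essentially the same route as the paper: orbit averaging via invariance and Fubini, identifying the pushforward of Haar measure under $k\mapsto k\zeta$ with $\sigma$, applying homogeneity and Corollary~\ref{cor:sphere-marker-tower} to get $a(z)^{4m+s}c_m\binom{m-1}{s-1}$, checking $z=0$ separately, and deducing positivity. Your uniqueness-of-invariant-measure justification for the pushforward step fills in a point the paper simply asserts.
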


\begin{proof}
For any $\mu$-integrable function $F$, invariance of $\mu$ and Fubini's
theorem give
\begin{equation}\label{eq:orbit-average}
 \int_{\C^2}F(z)\,\dd\mu(z)
 =\int_{\C^2}\int_{SU(2)}F(kz)\,\dd k\,\dd\mu(z).
\end{equation}
For fixed $z\neq0$, write $z=r\zeta$ with $r=\|z\|$ and $\zeta\in S^3$.
The pushforward of Haar measure under $k\mapsto k\zeta$ is normalized surface
measure on $S^3$.  By \eqref{eq:homogeneity} and
Corollary \ref{cor:sphere-marker-tower},
\[
 \int_{SU(2)}\mathscr P(kz)^m\,\dd k
 =r^{8m}\int_{S^3}p^m\,\dd\sigma=0,
\]
and
\begin{align*}
 \int_{SU(2)}\mathscr Q(kz)^s\mathscr P(kz)^m\,\dd k
 &=r^{8m+2s}c_m\binom{m-1}{s-1}\\
 &=a(z)^{4m+s}c_m\binom{m-1}{s-1}.
\end{align*}
The same formulas hold at $z=0$.  Substitution into
\eqref{eq:orbit-average} proves \eqref{eq:radial-pure} and
\eqref{eq:radial-marked}.  If $\mu$ is not concentrated at the origin, then
$a^{4m+s}$ is nonnegative and positive on a set of positive $\mu$-measure for
every $m,s\geq1$.  For $1\leq s\leq m$, combine this with $c_m>0$ and
$\binom{m-1}{s-1}>0$ to obtain \eqref{eq:radial-positive}.
\end{proof}

\section{Root-\texorpdfstring{$SU(2)$}{SU(2)} transfer to compact simple groups}

We first treat a compact connected simply connected simple Lie group
$\widetilde G$.  Fix a maximal torus and a system of simple roots.  For a
simple root $\alpha$, let $\varpi_\alpha$ be the corresponding fundamental
weight.  Since $\widetilde G$ is simply connected, the highest-weight theorem
provides an irreducible representation
\begin{equation}\label{eq:fundamental-representation}
 \rho:\widetilde G\longrightarrow U(V)
\end{equation}
with highest weight $\varpi_\alpha$.  We choose a $\widetilde G$-invariant
Hermitian inner product on $V$, linear in the first variable.

\begin{lemma}[Visible root doublet]\label{lem:root-doublet}
Let $e_0\in V$ be a unit highest-weight vector.  There exist
\begin{enumerate}[label=\textup{(\roman*)}]
\item a root subgroup $K_\alpha\subseteq\widetilde G$ isomorphic to $SU(2)$;
\item a $K_\alpha$-invariant two-dimensional subspace
$W=\Span_\C\{e_0,e_1\}\subseteq V$;
\end{enumerate}
such that the representation of $K_\alpha$ on $W$ is unitarily equivalent to
the defining representation of $SU(2)$ on $\C^2$.
\end{lemma}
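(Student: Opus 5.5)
We will build the root $SU(2)$ from the complexified Lie algebra and then read off the action on the $\mathfrak{sl}_2$-string through $e_0$.

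\textbf{Step 1: the root $\mathfrak{su}(2)$.} Let $\mathfrak g=\mathrm{Lie}(\widetilde G)$ and $\mathfrak t$ the Lie algebra of the fixed maximal torus. Choose root vectors $E_\alpha\in\mathfrak g_\C$ and $F_\alpha=-\theta(E_\alpha)$, where $\theta$ is the conjugation of $\mathfrak g_\C$ with respect to the compact form $\mathfrak g$. Normalize them so that $H_\alpha=[E_\alpha,F_\alpha]$ is the coroot, i.e.\ $\beta(H_\alpha)=2\ip{\beta}{\alpha}/\ip{\alpha}{\alpha}$. Then
\[
 iH_\alpha,\quad E_\alpha-F_\alpha,\quad i(E_\alpha+F_\alpha)
\]
span a real subalgebra $\mathfrak k_\alpha\subseteq\mathfrak g$ isomorphic to $\mathfrak{su}(2)$. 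Since $SU(2)$ is simply connected, the inclusion $\mathfrak k_\alpha\hookrightarrow\mathfrak g$ integrates to a homomorphism $\phi:SU(2)\to\widetilde G$; let $K_\alpha$ be its image.

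\textbf{Step 2: the doublet.} Because $\varpi_\alpha(H_\alpha)=1$, the highest weight vector $e_0$ satisfies $E_\alpha e_0=0$ (as $\varpi_\alpha+\alpha$ is not a weight) and $H_\alpha e_0=e_0$. By standard $\mathfrak{sl}_2$ theory, the $\mathfrak{sl}_2$-submodule generated by $e_0$ is irreducible of highest weight $1$. It is therefore two-dimensional, spanned by $e_0$ and $F_\alpha e_0$, and $F_\alpha e_0\neq0$ because $\varpi_\alpha-\alpha$ is a weight. The vectors $e_0$ and $F_\alpha e_0$ have distinct $\mathfrak t$-weights, so they are orthogonal for the invariant inner product. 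Set
\[
 e_1=F_\alpha e_0/\|F_\alpha e_0\|,\qquad W=\Span_\C\{e_0,e_1\}.
\]
Since $W$ is stable under $\mathfrak k_\alpha$ and $SU(2)$ is connected, $W$ is stable under $K_\alpha$.

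\textbf{Step 3: identification with the defining representation.} The pulled-back representation $\rho\circ\phi$ on $W$ is an irreducible unitary two-dimensional representation of $SU(2)$. Every such representation is equivalent to the defining one. Two unitary representations that are equivalent are unitarily equivalent, since an intertwiner can be polar-decomposed and Schur's lemma makes the positive part scalar. In particular $\rho\circ\phi$ is faithful on $W$, so $\phi$ is injective. Hence $\phi$ is an isomorphism $SU(2)\cong K_\alpha$, which gives (i), and the representation of $K_\alpha$ on $W$ is unitarily equivalent to the defining one, which gives (ii).

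\textbf{Main obstacle.} The delicate point is the normalization in Step 1: we need the real span to close as $\mathfrak{su}(2)$ with $H_\alpha$ the coroot, so that the highest weight is exactly $1$. Invoking the standard root-subgroup construction from \cite{BrockerTomDieck1985,Hall2015} settles this. Faithfulness then comes for free from the doublet, which is why we use the representation to prove $K_\alpha\cong SU(2)$ rather than appealing to simple connectivity.
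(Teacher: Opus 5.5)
Your proposal is correct and follows essentially the same route as the paper. You restrict to the root $\mathfrak{sl}_2$, use $\varpi_\alpha(H_\alpha)=1$ to get the doublet spanned by $e_0$ and $F_\alpha e_0$, integrate the compact real form to $\phi:SU(2)\to\widetilde G$, and get injectivity of $\phi$ from faithfulness of the defining representation on $W$. You also spell out two points the paper leaves implicit: $e_0\perp F_\alpha e_0$ because their $\mathfrak t$-weights differ (this is what makes $(e_0,e_1)$ orthonormal), and equivalence of unitary representations upgrades to unitary equivalence.
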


\begin{proof}
The defining property of the fundamental weight is
\[
 \ip{\varpi_\alpha}{\alpha^\vee}=1,
\]
where the brackets here denote the canonical weight--coroot pairing.
Restrict the representation to the $\mathfrak{sl}_2$-subalgebra associated
with $\alpha$.  The vector $e_0$ is an $\mathfrak{sl}_2$-highest-weight vector
of highest weight $1$.  Hence the cyclic $\mathfrak{sl}_2$-submodule generated
by $e_0$ is the irreducible highest-weight-one module.  If $F_\alpha$ is a
nonzero lowering operator, then
\[
 F_\alpha e_0\neq0,
 \qquad
 F_\alpha^2e_0=0,
\]
and the two weights are $\varpi_\alpha$ and
$\varpi_\alpha-\alpha$.  Let $e_1$ be a normalization of $F_\alpha e_0$ and
set $W=\Span_\C\{e_0,e_1\}$.

The compact real form of the root $\mathfrak{sl}_2$ integrates to a
homomorphism $\phi_\alpha:SU(2)\to\widetilde G$.  On $W$, the
representation $\rho\circ\phi_\alpha$ is the irreducible
highest-weight-one representation of $SU(2)$, hence the defining
two-dimensional representation.  This representation is faithful, so
$\phi_\alpha$ is injective.  Thus
$K_\alpha=\phi_\alpha(SU(2))\simeq SU(2)$, and the image of $K_\alpha$ in
$U(W)$ is the full special unitary group $SU(W)$.  In particular, in every
orthonormal coordinate system on $W$ this image is the standard subgroup
$SU(2)\subset U(2)$; no basis-invariance of the polynomial pair itself is
required.  These standard
root-subgroup and highest-weight facts are treated in
\cite[Chapters V--VI]{BrockerTomDieck1985} and
\cite[Parts II--III]{Hall2015}.
\end{proof}

Let $\Pi_W:V\to W$ be orthogonal projection.  Since $\rho$ is unitary and
$W$ is $K_\alpha$-invariant, $W^\perp$ is also $K_\alpha$-invariant, and
therefore
\begin{equation}\label{eq:projection-commutes}
 \Pi_W\rho(k)=\rho(k)\Pi_W
 \qquad(k\in K_\alpha).
\end{equation}
Define
\begin{equation}\label{eq:Phi-definition}
 \Phi:\widetilde G\longrightarrow W,
 \qquad
 \Phi(g)=\Pi_W\rho(g)e_0.
\end{equation}
Using the orthonormal basis $(e_0,e_1)$, identify $W$ unitarily with
$\C^2$.  By Lemma \ref{lem:root-doublet} and the preceding observation, the
image of $K_\alpha$ is the standard $SU(2)$ in these coordinates.  Write
\begin{equation}\label{eq:z-coordinates}
 \Phi(g)=z_0(g)e_0+z_1(g)e_1,
 \qquad
 z_j(g)=\ip{\rho(g)e_0}{e_j}.
\end{equation}

\begin{lemma}[Radial Haar pushforward]\label{lem:radial-pushforward}
The pushforward $\mu=\Phi_*(\dd g)$ of normalized Haar measure on
$\widetilde G$ is a compactly supported $SU(2)$-invariant probability measure
on $\C^2$, and it is not concentrated at the origin.
\end{lemma}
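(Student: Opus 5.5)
Three properties have to be checked: the pushforward is a compactly supported probability measure, it is $SU(2)$-invariant, and it does not sit entirely at the origin.

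\textbf{Probability and compact support.} The map $\Phi$ is continuous, and its coordinates $z_j(g)=\ip{\rho(g)e_0}{e_j}$ are matrix coefficients. Its image therefore lies in the closed unit ball, because
\[
\|\Phi(g)\|\leq\|\rho(g)e_0\|=1 .
\]
So $\mu=\Phi_*(\dd g)$ is a Borel probability measure supported in that compact ball.

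\textbf{$SU(2)$-invariance.} This reduces to an equivariance identity. For $k\in K_\alpha$, equation \eqref{eq:projection-commutes} gives
\[
\Phi(kg)=\Pi_W\rho(k)\rho(g)e_0=\rho(k)\Pi_W\rho(g)e_0=\rho(k)\Phi(g).
\]
Left invariance of Haar measure then gives, for every bounded Borel $F$ on $W\cong\C^2$,
\[
\int F(\rho(k)w)\,\dd\mu(w)=\int_{\widetilde G}F(\Phi(kg))\,\dd g=\int_{\widetilde G}F(\Phi(g))\,\dd g=\int F\,\dd\mu .
\]
Thus $\mu$ is invariant under $\rho(K_\alpha)|_W$. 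By Lemma~\ref{lem:root-doublet}, in the orthonormal coordinates $(e_0,e_1)$ this group is exactly the standard $SU(2)$. Hence $\mu$ is invariant under the defining action on $\C^2$.

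\textbf{Not concentrated at the origin.} At the identity, $\Phi(e)=e_0$, so $\|\Phi(e)\|=1$. By continuity, $\|\Phi\|>1/2$ on some open neighbourhood $U$ of $e$. Haar measure gives positive mass to nonempty open sets, so
\[
\mu(\C^2\setminus\{0\})\geq\int_{\widetilde G}\mathbf 1_U\,\dd g>0 .
\]
One could instead compute $\int\|\Phi\|^2\,\dd g$ using Schur orthogonality, but this is not needed.

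The only point needing care is the identification of the $K_\alpha$-action on $W$, in the chosen coordinates, with the standard $SU(2)$ action. That identification is supplied by Lemma~\ref{lem:root-doublet}, so nothing beyond the equivariance computation is required.
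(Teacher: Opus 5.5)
Your proposal is correct and follows essentially the same route as the paper. It uses the equivariance $\Phi(kg)=\rho(k)\Phi(g)$ from \eqref{eq:projection-commutes} together with left invariance of Haar measure for $SU(2)$-invariance, the bound $\|\Phi(g)\|\leq1$ for support in the unit ball, and continuity at the identity plus positivity of Haar measure on open sets to show $\mu$ is not concentrated at the origin.
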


\begin{proof}
For $k\in K_\alpha$, equations \eqref{eq:projection-commutes} and
\eqref{eq:Phi-definition} give
\begin{equation}\label{eq:Phi-equivariance}
 \Phi(kg)=\rho(k)\Phi(g).
\end{equation}
Left invariance of Haar measure therefore implies invariance of $\mu$ under
the defining $SU(2)$-action on $W\simeq\C^2$.

Since $\rho(g)e_0$ is a unit vector and $\Pi_W$ is an orthogonal projection,
$\|\Phi(g)\|\leq1$; hence $\mu$ is supported in the closed unit ball.  At the
identity, $\Phi(1)=e_0$.  By continuity, $\|\Phi(g)\|>1/2$ on some nonempty
open neighborhood of the identity.  Every nonempty open subset of a compact
group has positive Haar measure, so $\mu(\C^2\setminus\{0\})>0$.
\end{proof}

Apply \eqref{eq:hopf-coordinates} and \eqref{eq:universal-pair} to the
coordinate pair $(z_0(g),z_1(g))$.  This defines functions
$A,u,v,\tau,P,Q$ on $\widetilde G$:
\begin{equation}\label{eq:group-pair}
 A=a\circ\Phi=|z_0|^2+|z_1|^2,
 \qquad
 P=\mathscr P\circ\Phi,
 \qquad
 Q=\mathscr Q\circ\Phi.
\end{equation}

\begin{theorem}[Simply connected simple groups]\label{thm:simply-connected-simple}
Let $\widetilde G$ be a compact connected simply connected simple Lie group.
The functions $A,P,Q$ in \eqref{eq:group-pair} belong to $\cR(\widetilde G)$,
with $A\geq0$ and $A\not\equiv0$, and satisfy, for every $m\geq1$,
\begin{equation}\label{eq:simple-pure}
 \int_{\widetilde G}P(g)^m\,\dd g=0.
\end{equation}
For every integer $s\geq1$,
\begin{equation}\label{eq:simple-marker-tower}
 \int_{\widetilde G}Q(g)^sP(g)^m\,\dd g
 =c_m\binom{m-1}{s-1}
 \int_{\widetilde G}A(g)^{4m+s}\,\dd g.
\end{equation}
The right-hand side is strictly positive for $1\leq s\leq m$ and is zero for
$s>m$.
\end{theorem}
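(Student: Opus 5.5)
The proof reduces everything to Theorem~\ref{thm:radial-transfer} applied to the measure $\mu=\Phi_*(\dd g)$ of Lemma~\ref{lem:radial-pushforward}, together with a check that the functions lie in $\cR(\widetilde G)$. The steps are as follows.

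\emph{Membership in $\cR(\widetilde G)$.} By \eqref{eq:z-coordinates}, each $z_j(g)=\ip{\rho(g)e_0}{e_j}$ is a matrix coefficient of $\rho$. By Lemma~\ref{lem:representative-algebra}(ii), $\overline{z_j}$ is a coefficient of $\overline\rho$. Since $\cR(\widetilde G)$ is a unital $*$-algebra (Lemma~\ref{lem:representative-algebra}(i)), every polynomial in $z_0,z_1,\overline{z_0},\overline{z_1}$ is representative. The functions $a,u,v,\tau$, and hence $\mathscr P$ and $\mathscr Q$ in \eqref{eq:universal-pair}, are such polynomials. Therefore $A,P,Q\in\cR(\widetilde G)$. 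Clearly $A=|z_0|^2+|z_1|^2\geq0$. Moreover $A(1)=\|e_0\|^2=1$ because $\Phi(1)=e_0$, so $A\not\equiv0$.

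\emph{Moment identities.} For any continuous $F$ on $\C^2$, the change-of-variables formula for pushforward measures gives
\[
\int_{\widetilde G}F(\Phi(g))\,\dd g=\int_{\C^2}F\,\dd\mu .
\]
The functions $\mathscr P^m$, $\mathscr Q^s\mathscr P^m$, and $a^{4m+s}$ are continuous, and $\mu$ is compactly supported, so all integrals converge. Lemma~\ref{lem:radial-pushforward} states that $\mu$ is a compactly supported, $SU(2)$-invariant, finite positive measure on $\C^2$ with $\mu(\C^2\setminus\{0\})>0$. Here $SU(2)$ acts in the orthonormal coordinates $(e_0,e_1)$, in which the image of $K_\alpha$ is the standard $SU(2)$ by Lemma~\ref{lem:root-doublet}. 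Theorem~\ref{thm:radial-transfer} therefore applies directly. Its identity \eqref{eq:radial-pure} pulls back to \eqref{eq:simple-pure}, and \eqref{eq:radial-marked} pulls back to \eqref{eq:simple-marker-tower}.

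\emph{Sign of the right-hand side.} Theorem~\ref{thm:radial-transfer} also gives $\int a^{4m+s}\dd\mu>0$. Alternatively, $A^{4m+s}$ is continuous, nonnegative, and positive near the identity, so its Haar integral is positive. For $1\leq s\leq m$ we have $c_m>0$ and $\binom{m-1}{s-1}>0$, so the right-hand side is strictly positive. For $s>m$ the binomial coefficient vanishes by convention, so the right-hand side is zero.

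\emph{Main point to check.} Nearly all the work is already done by the earlier results. The one place needing care is that the $SU(2)$-invariance of $\mu$ is with respect to the same coordinates in which $\mathscr P$ and $\mathscr Q$ are defined. This holds because \eqref{eq:Phi-equivariance} intertwines left translation by $K_\alpha$ with the action of $\rho(K_\alpha)=SU(W)$, which is the standard $SU(2)$ in the basis $(e_0,e_1)$. Beyond that, one only needs to confirm that the pushforward formula is applied to continuous, hence bounded, functions on the compact support.
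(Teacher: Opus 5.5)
Your proposal is correct and follows essentially the same route as the paper. You get membership in $\cR(\widetilde G)$ from Lemma~\ref{lem:representative-algebra}, nonvanishing from $A(1)=1$, and then apply Theorem~\ref{thm:radial-transfer} to $\mu=\Phi_*(\dd g)$ via Lemma~\ref{lem:radial-pushforward}. Your point that the $SU(2)$-invariance must hold in the same orthonormal coordinates $(e_0,e_1)$ in which $\mathscr P,\mathscr Q$ are evaluated is the one the paper settles through Lemma~\ref{lem:root-doublet} and \eqref{eq:Phi-equivariance}.
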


\begin{proof}
Each $z_j$ in \eqref{eq:z-coordinates} is a matrix coefficient of $\rho$.
By Lemma \ref{lem:representative-algebra}, its conjugate and every polynomial
in the $z_j$ and $\overline{z_j}$ are representative functions.  Hence
$A,P,Q\in\cR(\widetilde G)$.  Moreover, $A=\|\Phi\|^2\geq0$, and
$A(1)=1$.  Lemma \ref{lem:radial-pushforward} allows us to apply Theorem
\ref{thm:radial-transfer} to $\mu=\Phi_*(\dd g)$, which gives all displayed
identities and the strict positivity.
\end{proof}

\begin{proposition}[Closed forms in the defining representations]
\label{prop:classical-closed-forms}
Choose the defining representation and the first simple root for $SU(n)$ or
$Sp(n)$.  Write $(x)_k=x(x+1)\cdots(x+k-1)$ for the rising factorial.  For
$SU(n)$ with $n\geq2$,
\begin{equation}\label{eq:SU-n-closed-form}
 \cI_{SU(n)}(Q^sP^m)
 =c_m\binom{m-1}{s-1}\frac{(2)_{4m+s}}{(n)_{4m+s}}
 \qquad(s\geq1).
\end{equation}
For $Sp(n)$ with $n\geq1$,
\begin{equation}\label{eq:Sp-n-closed-form}
 \cI_{Sp(n)}(Q^sP^m)
 =c_m\binom{m-1}{s-1}\frac{(2)_{4m+s}}{(2n)_{4m+s}}
 \qquad(s\geq1).
\end{equation}
In particular, the first marked
moments for $m=1,2,3$ are
\[
\begin{array}{c|ccc}
 G & m=1 & m=2 & m=3 \\ \hline
 SU(2) & \dfrac23 & \dfrac8{15} & \dfrac{16}{35} \\[3pt]
 SU(3) & \dfrac4{21} & \dfrac{16}{165} & \dfrac{32}{525} \\[3pt]
 SU(4),\ Sp(2) & \dfrac1{14} & \dfrac4{165} & \dfrac2{175}
\end{array}
\]
\end{proposition}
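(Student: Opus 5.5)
By Theorem \ref{thm:simply-connected-simple}, the whole proposition reduces to computing one radial moment, $\cI_G(A^{k})$ with $k=4m+s$, and checking the table. The marker tower \eqref{eq:simple-marker-tower} already holds for $SU(n)$ and $Sp(n)$. Both groups are simply connected, the defining representation has highest weight $\varpi_{\alpha_1}$, and $e_0$ is the first standard basis vector.

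For $SU(n)$ the highest-weight vector is $e_0=\epsilon_1$, and the lowering operator for $\alpha_1$ sends it to $e_1=\epsilon_2$. Thus $W$ is spanned by the first two standard basis vectors and
\[
 A(g)=|g_{11}|^2+|g_{21}|^2 .
\]
The first column $g\epsilon_1$ of a Haar-random element of $SU(n)$ is uniformly distributed on the unit sphere $S^{2n-1}\subset\C^n$. This holds because $SU(n)$ acts transitively on that sphere for $n\geq2$, and we push forward Haar measure exactly as in Section~\ref{sec:radial-transfer}. Writing $w=(w_1,\dots,w_n)$ for a uniform point, the vector $(|w_1|^2,\dots,|w_n|^2)$ is uniformly distributed on the simplex, i.e.\ Dirichlet$(1,\dots,1)$. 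So $A=|w_1|^2+|w_2|^2$ is a Beta$(2,n-2)$ variable for $n\geq3$, and $A\equiv1$ when $n=2$. The Beta moment formula gives
\[
 \mathbb E[A^k]=\frac{(2)_k}{(n)_k}
\]
in both cases, since for $n=2$ the ratio is $1$. This is \eqref{eq:SU-n-closed-form}.

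For $Sp(n)\subset SU(2n)$ in its defining representation on $\C^{2n}$, I would choose symplectic coordinates so that the highest-weight vector and its $\alpha_1$-lowering are two standard basis vectors $\epsilon_1,\epsilon_2$. The key fact is that $Sp(n)$ acts transitively on the unit sphere $S^{4n-1}\subset\C^{2n}=\mathbb H^n$. So $g\epsilon_1$ is again uniform on the sphere, and the same Dirichlet argument in complex dimension $2n$ gives $A\sim$ Beta$(2,2n-2)$ and hence $\mathbb E[A^k]=(2)_k/(2n)_k$, which is \eqref{eq:Sp-n-closed-form}. Here $W$ is spanned by two coordinate lines of a unitary basis, so $A$ is still a sum of two $|w_i|^2$ terms. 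This remains true even if $\epsilon_2$ is the quaternionic partner $j\epsilon_1$ of $\epsilon_1$, as happens for $n=1$, where $Sp(1)=SU(2)$ and $A\equiv1$.

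The table entries then follow by specializing to $s=1$, where the coefficient is $\binom{m-1}{0}=1$. We have $c_1=2/3$, $c_2=8/15$, $c_3=16/35$, and $(2)_k/(n)_k=\binom{k+1}{k}\big/\binom{n+k-1}{k}$. For example, $SU(3)$ with $m=1$ gives $k=5$ and $(2)_5/(3)_5=720/2520=2/7$, so the entry is $\tfrac23\cdot\tfrac27=\tfrac4{21}$. The $SU(4)$ and $Sp(2)$ rows coincide because $2n=4$. The rest is arithmetic.

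The main obstacle is the identification of $W$ for $Sp(n)$. One must verify that the $\alpha_1$-doublet is spanned by two orthonormal coordinate vectors, and that $Sp(n)$ is transitive on $S^{4n-1}$ so that the column distribution is uniform. Everything else is Dirichlet moments.
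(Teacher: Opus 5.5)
Your proposal is correct and follows the paper's proof essentially step for step. You reduce via Theorem~\ref{thm:simply-connected-simple} to the moment $\cI_G(A^{4m+s})$, use transitivity on the unit sphere to get a uniformly distributed first column, read off $A\sim\operatorname{Beta}(2,n-2)$ (respectively $\operatorname{Beta}(2,2n-2)$) from the Dirichlet$(1,\dots,1)$ law, and handle the endpoints $SU(2)=Sp(1)$ by $A\equiv1$. Your weight-space identification of $W$ as two orthonormal coordinate lines for $Sp(n)$ supplies a detail the paper leaves implicit, and the table values check out.
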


\begin{proof}
For the defining representation of $SU(n)$, the map $\rho(g)e_0$ is the first
column of $g$, uniformly distributed on the unit sphere in $\C^n$, and
$A=|g_{11}|^2+|g_{21}|^2$.  If $n\geq3$, the vector of squared coordinate
moduli has the Dirichlet distribution with all parameters equal to $1$;
this follows, for example, by normalizing independent standard complex
Gaussian variables.  Hence
\[
 A\sim\operatorname{Beta}(2,n-2),
 \qquad
 \cI_{SU(n)}(A^k)=\frac{B(k+2,n-2)}{B(2,n-2)}
 =\frac{(2)_k}{(n)_k}.
\]
At the endpoint $n=2$, one has $A\equiv1$, and the same rising-factorial
formula remains valid because $(2)_k/(2)_k=1$.  Substitution of $k=4m+s$ into
\eqref{eq:simple-marker-tower} proves \eqref{eq:SU-n-closed-form}.

The defining action of $Sp(n)$ is transitive on the unit sphere in
$\C^{2n}$, and its invariant probability measure there is normalized surface
measure.  For $n\geq2$, the same argument gives
$A\sim\operatorname{Beta}(2,2n-2)$.  At the endpoint $n=1$, one has
$Sp(1)=SU(2)$ and $A\equiv1$, while $(2)_k/(2n)_k=1$.  This proves
\eqref{eq:Sp-n-closed-form}.  The displayed values follow by direct
simplification.
\end{proof}

We now verify descent through the full center, including finite-type status on
the quotient.

\begin{lemma}[Center descent]\label{lem:center-descent}
Let $Z=Z(\widetilde G)$.  The functions $A,u,v,\tau,P,Q$ above are invariant
under left multiplication by $Z$ and are pullbacks of representative
functions on the adjoint quotient $\widetilde G/Z$.  Consequently, they
descend as representative functions to every central quotient
$\widetilde G/C$ with $C\leq Z$.
\end{lemma}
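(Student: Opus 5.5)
The plan is to use Schur's lemma on the center: since $\rho$ is irreducible, each central element acts by a scalar, and the functions are built from quadratic expressions that are insensitive to a common phase. Concretely, for $\zeta\in Z$, Schur's lemma gives $\rho(\zeta)=\chi(\zeta)\,\mathrm{id}_V$ with $|\chi(\zeta)|=1$, because $\rho$ is unitary. Then
\[
 z_j(\zeta g)=\ip{\rho(\zeta)\rho(g)e_0}{e_j}=\chi(\zeta)z_j(g),
\]
so $\Phi(\zeta g)=\chi(\zeta)\Phi(g)$. This is exactly the common phase action $(z_0,z_1)\mapsto(\lambda z_0,\lambda z_1)$ with $\lambda=\chi(\zeta)$. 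Each of $a,\tau,u,v$ in \eqref{eq:hopf-coordinates} is a sum of terms $z_i\overline{z_j}$, so each is invariant under this action. Hence $A,u,v,\tau$, and therefore the polynomial combinations $P,Q$ in \eqref{eq:universal-pair}, are left $Z$-invariant. Since $Z$ is central, left and right invariance coincide, so these functions are constant on $Z$-cosets. They therefore define continuous functions on $\widetilde G/Z$, using the quotient topology.

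The remaining point, which I expect to be the only real content, is that the descended functions are representative on the quotient, not merely continuous. I would argue via the representation $\rho\otimes\overline\rho$ on $V\otimes\overline V\cong\operatorname{End}(V)$. Each product $z_i\overline{z_j}$ is the matrix coefficient $g\mapsto\ip{(\rho\otimes\overline\rho)(g)(e_0\otimes\overline{e_0})}{e_i\otimes\overline{e_j}}$, by Lemma \ref{lem:representative-algebra}(ii). On $Z$, the representation $\rho\otimes\overline\rho$ acts by $\chi(\zeta)\overline{\chi(\zeta)}=1$, so it factors through a continuous representation $\sigma$ of $\widetilde G/Z$. This uses that the quotient map is open and surjective, so a continuous homomorphism constant on $Z$ descends continuously. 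Thus $a,\tau,u,v$ are matrix coefficients of $\sigma$ on $\widetilde G/Z$. Then $P,Q$, being polynomials in them, are representative there by Lemma \ref{lem:representative-algebra}(i).

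Finally, for $C\leq Z$, the projection $\widetilde G/C\to\widetilde G/Z$ is a continuous surjective homomorphism. Composing with it gives representative functions on $\widetilde G/C$, by Lemma \ref{lem:representative-algebra}(iii). These composites agree with the descents of $A,u,v,\tau,P,Q$ to $\widetilde G/C$. No additional obstacle is expected beyond keeping track of the scalar character and the continuity of the factored representation.
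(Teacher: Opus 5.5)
Your proposal is correct and follows essentially the same route as the paper: Schur's lemma gives a unit scalar $\chi$ on the center, and $A,u,v,\tau$ (hence $P,Q$) are built from the phase-balanced products $z_i\overline{z_j}$. Representativity on $\widetilde G/Z$ then comes from $\rho\otimes\overline\rho$ factoring through the quotient, followed by pullback to the intermediate quotients $\widetilde G/C$; your added remarks on continuity of the factored representation are harmless refinements of the same argument.
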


\begin{proof}
By irreducibility and Schur's lemma, for every $c\in Z$ there is a unit scalar
$\chi(c)$ such that $\rho(c)=\chi(c)I$.  Therefore
\[
 z_j(cg)=\chi(c)z_j(g),
 \qquad
 \overline{z_j(cg)}=\overline{\chi(c)}\,\overline{z_j(g)}.
\]
Every quantity in \eqref{eq:hopf-coordinates}, and hence $A,P,Q$, is a
polynomial in the phase-balanced products $z_i\overline{z_j}$; all are
therefore $Z$-invariant.

To see directly that the descended functions are representative, observe
that $z_i\overline{z_j}$ is a matrix coefficient of
$\rho\otimes\overline\rho$.  The center acts on this tensor product as
$\chi(c)\overline{\chi(c)}I=I$, so $\rho\otimes\overline\rho$ factors through
$\widetilde G/Z$.  The functions $A,u,v,\tau$ are linear combinations of its
matrix coefficients, while $P$ and $Q$ are matrix coefficients of finite
direct sums of tensor powers thereof.  Thus all are representative functions
on $\widetilde G/Z$, and their pullbacks to every intermediate central
quotient remain representative.
\end{proof}

\begin{corollary}[Every compact simple central form]
\label{cor:simple-central-forms}
Let $H$ be a compact connected Lie group with simple Lie algebra.  Then there
exist $A_H,P_H,Q_H\in\cR(H)$, with $A_H\geq0$ and $A_H\not\equiv0$, such that
for every $m\geq1$,
\[
 \cI_H(P_H^m)=0,
\]
and, for every $s\geq1$,
\[
 \cI_H(Q_H^sP_H^m)
 =c_m\binom{m-1}{s-1}\cI_H(A_H^{4m+s}).
\]
In particular, $H$ does not have the Mathieu property.
\end{corollary}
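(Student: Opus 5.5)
The plan is to realize $H$ as a central quotient of a simply connected compact simple group and push the construction of Theorem~\ref{thm:simply-connected-simple} down through Lemma~\ref{lem:center-descent}.

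\emph{Step 1: central quotient presentation.} Since $H$ is compact connected with simple Lie algebra $\mathfrak h$, the universal cover $\widetilde G$ of $H$ is compact. This follows from Weyl's theorem, because $\mathfrak h$ is semisimple of compact type, so the universal covering group is compact with finite center. Thus $\widetilde G$ is a compact connected simply connected simple Lie group. The covering map $\pi:\widetilde G\to H$ is a surjective homomorphism whose kernel $C$ is a discrete normal subgroup of a connected group, hence central. So $C\leq Z(\widetilde G)$ and $H\cong\widetilde G/C$.

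\emph{Step 2: descent of the functions.} Build $A,P,Q\in\cR(\widetilde G)$ as in \eqref{eq:group-pair}. By Lemma~\ref{lem:center-descent} they are $Z$-invariant, so they are $C$-invariant. They therefore define functions $A_H,P_H,Q_H$ on $H$ with $A=A_H\circ\pi$, $P=P_H\circ\pi$ and $Q=Q_H\circ\pi$. The same lemma gives $A_H,P_H,Q_H\in\cR(H)$. Here the relevant representations are tensor powers of $\rho\otimes\overline\rho$, which factor through $\widetilde G/Z$ and hence through $\widetilde G/C$. Nonnegativity transfers pointwise, and $A_H(\pi(1))=A(1)=1$, so $A_H\not\equiv0$.

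\emph{Step 3: moments.} By Lemma~\ref{lem:haar-pullback}, $\cI_H(F)=\cI_{\widetilde G}(F\circ\pi)$ for continuous $F$. Applied to $P_H^m$, $Q_H^sP_H^m$ and $A_H^{4m+s}$, this turns \eqref{eq:simple-pure} and \eqref{eq:simple-marker-tower} into the two displayed identities for $H$.

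\emph{Step 4: failure of the Mathieu property.} Take $f=P_H$ and $h=Q_H$. Then $f^m\in\ker\cI_H$ for all $m\geq1$. With $s=1$, however, $\cI_H(hf^m)=c_m\cI_H(A_H^{4m+1})$. This is strictly positive because $c_m>0$ and $A_H$ is a nonnegative continuous function that is positive near the identity. Hence $hf^m\notin\ker\cI_H$ for every $m\geq1$, so $\ker\cI_H$ is not a Mathieu subspace.

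The only step that is not bookkeeping is Step~1, namely compactness of the universal cover, equivalently finiteness of $\pi_1(H)$ for a group with semisimple Lie algebra. I would cite it as the standard Weyl theorem from \cite{BrockerTomDieck1985}. If a self-contained argument is preferred, take any compact simply connected group $\widetilde G$ with Lie algebra $\mathfrak h$. It exists as the simply connected form from the classification, or by Weyl's theorem applied to the adjoint group $\operatorname{Int}(\mathfrak h)$. Then $H$ is its quotient by a discrete central subgroup, since both groups are covered by the same simply connected group with Lie algebra $\mathfrak h$.
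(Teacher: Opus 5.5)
Your proposal is correct and follows the paper's proof: write $H\simeq\widetilde G/C$ with $C\leq Z(\widetilde G)$, descend the triple via Lemma~\ref{lem:center-descent}, and transfer the moment identities through Lemma~\ref{lem:haar-pullback}. Your only additions are a justification of the central-quotient presentation via Weyl's theorem, which the paper simply asserts, and an explicit $s=1$ argument showing that the Mathieu property fails.
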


\begin{proof}
There is a compact connected simply connected simple group $\widetilde G$ and
a subgroup $C\leq Z(\widetilde G)$ such that
$H\simeq\widetilde G/C$.  By Lemma \ref{lem:center-descent}, the triple from
Theorem \ref{thm:simply-connected-simple} descends to $H$.  The quotient map
pushes normalized Haar measure forward to normalized Haar measure, so the
moment identities and positivity are unchanged by Lemma
\ref{lem:haar-pullback}.
\end{proof}

\section{Arbitrary compact connected groups}

The remaining structural step is to exhibit a simple quotient of every
nonabelian compact connected group.

\begin{proposition}[Adjoint simple quotient]\label{prop:adjoint-simple-quotient}
Let $G$ be a nonabelian compact connected Lie group.  Then there is a
continuous surjective homomorphism
\[
 \pi:G\twoheadrightarrow H
\]
onto a compact connected adjoint simple Lie group $H$.
\end{proposition}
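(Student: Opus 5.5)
The plan is to pass to the Lie algebra and then go back down to the group via the adjoint representation. Let $\mathfrak g$ be the Lie algebra of $G$. Since $G$ is compact, $\mathfrak g$ carries an $\Ad(G)$-invariant inner product. With respect to it, the orthogonal complement of any ideal is again an ideal, so $\mathfrak g$ is reductive:
\[
 \mathfrak g=\mathfrak z\oplus\mathfrak s_1\oplus\cdots\oplus\mathfrak s_r ,
\]
where $\mathfrak z$ is the center and each $\mathfrak s_i$ is a simple ideal. Because $G$ is connected and nonabelian, $\mathfrak g$ is nonabelian, so $r\geq1$. Fix the simple ideal $\mathfrak s=\mathfrak s_1$.

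Next define $\pi$ and show it lands in the right group. Set $\pi(g)=\Ad(g)|_{\mathfrak s}$. Since $G$ is connected, $\Ad(G)$ preserves every ideal: it is generated by $\exp(\ad X)$, and $\ad X$ preserves ideals. So $\pi:G\to GL(\mathfrak s)$ is a continuous homomorphism. Its image $H$ is compact and connected, and it is a Lie group because it is a closed subgroup of $GL(\mathfrak s)$. Its Lie algebra is $d\pi(\mathfrak g)=\ad_{\mathfrak s}(\mathfrak g)$. This equals $\ad(\mathfrak s)\cong\mathfrak s$, because $\mathfrak z$ and the other ideals $\mathfrak s_i$ act trivially on $\mathfrak s$, while $\ad|_{\mathfrak s}$ is injective on $\mathfrak s$ since $\mathfrak s$ has trivial center. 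Hence $H$ has simple Lie algebra, so $H$ is a compact simple group.

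Finally I need $H$ to be adjoint, i.e.\ to have trivial center; this is the one step needing care. Let $h\in Z(H)$. Then $\Ad_H(h)$ is the identity on $\mathrm{Lie}(H)=\ad(\mathfrak s)$. Write $\mathrm{Lie}(H)\subseteq\mathfrak{gl}(\mathfrak s)$ and note $\Ad_H(h)(\ad X)=h\circ\ad X\circ h^{-1}$. Since $h$ is of the form $\Ad(g)|_{\mathfrak s}$, it is a Lie algebra automorphism of $\mathfrak s$, and therefore
\[
 h\circ\ad X\circ h^{-1}=\ad(hX).
\]
So $\ad(hX)=\ad X$ for all $X\in\mathfrak s$. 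Injectivity of $\ad$ on $\mathfrak s$ then gives $hX=X$, so $h=\mathrm{id}_{\mathfrak s}$. Thus $Z(H)$ is trivial and $H$ is adjoint. (Equivalently, $H$ is the identity component of $\Aut(\mathfrak s)$, namely $\Int(\mathfrak s)$, and this group is centerless.)

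Since $\pi$ is surjective onto $H$ by construction, this completes the plan. I expect the main obstacle to be the centerlessness step. The decisive point there is that elements of $H$ are automorphisms of $\mathfrak s$ and conjugate $\ad X$ to $\ad(hX)$. Combined with the fact that $\mathfrak s$ has trivial center, this forces every central element of $H$ to be the identity.
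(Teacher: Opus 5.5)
Your proof is correct and takes the same route as the paper: the same decomposition $\mathfrak g=\mathfrak z\oplus\mathfrak s_1\oplus\cdots\oplus\mathfrak s_r$, the same map $g\mapsto\Ad(g)|_{\mathfrak s}$, and the same identification of its Lie algebra with $\ad(\mathfrak s)$. The only difference is the last step: the paper shows the image is all of $\Int(\mathfrak s_i)$ (a closed connected subgroup with the same Lie algebra) and takes adjointness of $\Int(\mathfrak s_i)$ as standard, whereas you let $H$ be the image and check directly via $h\circ\ad X\circ h^{-1}=\ad(hX)$ that it is centerless, which is slightly more self-contained.
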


\begin{proof}
Let $\mathfrak g$ be the Lie algebra of $G$.  Compactness gives a decomposition
into ideals
\begin{equation}\label{eq:compact-lie-algebra-decomposition}
 \mathfrak g=\mathfrak z\oplus\mathfrak s_1\oplus\cdots\oplus\mathfrak s_r,
\end{equation}
where $\mathfrak z$ is the center and the $\mathfrak s_i$ are compact simple
Lie algebras.  Since $G$ is nonabelian, $r\geq1$.

The adjoint action of the connected group $G$ cannot nontrivially permute the
finite set of simple ideals, so each $\mathfrak s_i$ is $\Ad(G)$-invariant.
Fix $i$ and define
\[
 \pi_i:G\longrightarrow\Aut(\mathfrak s_i),
 \qquad
 \pi_i(g)=\Ad(g)|_{\mathfrak s_i}.
\]
The image is connected, hence lies in the identity component
$\Int(\mathfrak s_i)$ of $\Aut(\mathfrak s_i)$.  The differential of $\pi_i$
is
\[
 \dd\pi_i(X)=\ad(X)|_{\mathfrak s_i}.
\]
Using \eqref{eq:compact-lie-algebra-decomposition}, its image is exactly
$\ad(\mathfrak s_i)$, the Lie algebra of $\Int(\mathfrak s_i)$.  Because
$\pi_i(G)$ is compact, it is closed.  A closed connected subgroup with the
same Lie algebra as the connected group $\Int(\mathfrak s_i)$ is the whole
group.  Thus $\pi_i$ is surjective onto the compact connected adjoint simple
group $H=\Int(\mathfrak s_i)$.
\end{proof}

\begin{proof}[Proof of Theorem \ref{thm:uniform-nonabelian}]
By Proposition \ref{prop:adjoint-simple-quotient}, choose a surjection
$\pi:G\twoheadrightarrow H$ onto an adjoint compact simple group.  Choose
$A_H,P_H,Q_H\in\cR(H)$ as in Corollary
\ref{cor:simple-central-forms}.  Set
\[
 A=A_H\circ\pi,
 \qquad
 P=P_H\circ\pi,
 \qquad
 Q=Q_H\circ\pi.
\]
Lemma \ref{lem:haar-pullback} gives \eqref{eq:uniform-pure} and
\eqref{eq:uniform-marker-tower}, and preserves the pointwise nonnegativity and
nonvanishing of $A$.
\end{proof}

\section{The torus direction and completion of the classification}

Let $T$ be a compact connected abelian Lie group.  Then $T$ is a torus and
its character lattice $\widehat T$ is isomorphic to $\mathbb Z^d$ for some
$d\geq0$.  Under the resulting identification
\[
 \cR(T)\simeq\C[x_1^{\pm1},\ldots,x_d^{\pm1}],
\]
normalized Haar integration is Laurent constant-term extraction.

We use the following theorem of Duistermaat and van der Kallen
\cite{DuistermaatVanderKallen1998}.

\begin{theorem}[Duistermaat--van der Kallen]\label{thm:dvdk}
Let $0\neq f\in\C[x_1^{\pm1},\ldots,x_d^{\pm1}]$.  If
$\CT(f^m)=0$ for every $m\geq1$, then the origin does not belong to the convex
hull of the exponent support of $f$.
\end{theorem}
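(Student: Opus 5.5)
We argue by contraposition. Assume $0\in\operatorname{conv}(\operatorname{supp} f)$, write $N$ for this Newton polytope, and aim to show that $\CT(f^m)\neq0$ for some $m\geq1$. The first step is a face reduction. Let $F$ be the unique face of $N$ whose relative interior contains $0$, and let $f_F$ be the sum of the terms of $f$ whose exponents lie in $F$. Any product of $m$ monomials of $f$ with total exponent $0$ has all its factors in $F$. To see this, take a linear functional $\ell$ that is $\le0$ on $N$ and vanishes exactly on the supporting hyperplane of $F$: the values of $\ell$ on the factors are $\le 0$ and sum to $0$, so each is $0$. Hence $\CT(f^m)=\CT(f_F^m)$ for all $m$.

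If $F=\{0\}$, then $\CT(f^m)=c_0^m\neq0$ and we are done. Otherwise we restrict to the sublattice spanned by the exponents in $F$. This reduces the problem to the case where $0$ lies in the interior of $N$ and $N$ is full-dimensional in $\mathbb Z^{d'}$.

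For the interior case I would use the analytic generating function. For $y\in\R^{d}$ let $T_y=\{|x_j|=e^{y_j}\}$. The constant term of a Laurent polynomial equals its normalized integral over every $T_y$. Put $M(y)=\max_{T_y}|f|$. If all $\CT(f^m)$ vanish, then for $|t|<1/M(y)$ we have
\[
 \int_{T_y}\log\lvert 1-tf\rvert\,\dd\theta
 =-\operatorname{Re}\sum_{m\ge1}\frac{t^m}{m}\CT(f^m)=0 .
\]
Now, since $0$ is interior to $N$, the support function $h_N(y)=\max_{\nu\in N}\ip{\nu}{y}$ is strictly positive for $y\neq0$. Also $\log M(y)\ge h_N(y)+O(1)$, because the Ronkin function $R_f(y)=\int_{T_y}\log|f|$ is convex and asymptotic to $h_N$. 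Consequently $\log M$ is proper, and the quantity $M_*=\min_y M(y)$ is attained at some $y_0$. On the torus $T_{y_0}$ we are in an extremal situation: in the $t$-plane, the subharmonic function $\psi(t)=\int_{T_{y_0}}\log|1-tf|$ vanishes on the disc $|t|<1/M_*$. The plan is to show that $\psi$ must become strictly negative somewhere, or at least non-harmonic, on the boundary circle $|t|=1/M_*$.

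The intended contradiction runs as follows. By minimality of $y_0$, the values of $f$ of maximal modulus on $T_{y_0}$ cannot all be pushed down by moving $y$. This is a first-order condition: the exponent vectors weighted by the maximizing configurations balance, with $0$ in the convex hull of the relevant "gradient" data. From this condition I expect to show that the zero set of $1-tf$ meets $T_{y_0}$ in a set of positive $(d-1)$-dimensional measure when $|t|=1/M_*$. In that case Jensen-type formulas force $\psi$ to be non-harmonic there, contradicting the fact that, by the vanishing of the moments and analytic continuation across every shifted torus, $\psi\equiv0$ on a neighbourhood of that circle.

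The main obstacle is this last step: converting the minimax condition at $y_0$ into a genuine obstruction. The vanishing on a disc is automatic, and the real content of Duistermaat--van der Kallen lies in ruling out the borderline behaviour on $|t|=1/M_*$. If the direct Jensen argument proves too weak, my fallback is to analyse the asymptotics of $\CT(f^m)$ by steepest descent. At a critical point of $\log|f|$ on the real torus family $T_y$, that is, a saddle point of the amoeba picture, one would show $|\CT(f^m)|\sim C\,M_*^m m^{-d/2}$ with $C\neq0$, after arguing that contributions from distinct maximizing points cannot cancel for all $m$.
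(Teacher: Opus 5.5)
The paper gives no proof of this statement. It imports the result from Duistermaat--van der Kallen and uses it as a black box in Corollary~\ref{cor:torus}. Your face reduction is correct: faces of a polytope are exposed, so $\ell$ exists, and $\CT(f^m)=\CT(f_F^m)$. The passage to a full-dimensional lattice with $0$ interior to $N$ is also correct. But that interior case is the whole theorem, and your argument for it has a genuine gap at its decisive step.

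That step is the claim that the vanishing moments make $\psi(t)=\int_{T_{y_0}}\log\lvert 1-tf\rvert$ vanish on a neighbourhood of the circle $\lvert t\rvert=1/M_*$. Shifted tori cannot give this. Each $T_y$ only yields vanishing for $\lvert t\rvert<1/M(y)$, and $M(y)\ge M_*$ for all $y$ by the very choice of $y_0$. In fact the claim is false for every nonconstant $f$. The Laplacian $\Delta_t\psi$ is $2\pi$ times the pushforward of Haar measure on $T_{y_0}$ under $1/f$. This measure charges every neighbourhood of $1/f(x^*)$, for any $x^*\in T_{y_0}$ with $\lvert f(x^*)\rvert=M_*$, and that point lies on the circle. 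Consequently:
\begin{itemize}
\item non-harmonicity of $\psi$ on the circle holds automatically and contradicts nothing;
\item ``strictly negative on the circle'' is impossible, because upper semicontinuity together with $\psi=0$ on the open disc forces $\psi\ge0$ there;
\item any argument that the moment hypothesis forces $\psi\equiv0$ near the circle would already be a proof of the theorem, and none is given.
\end{itemize}

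Your intermediate expectation also fails. The first-order condition at $y_0$ holds for every $f$ with $0$ interior to $N$. Yet for $f=x_1+x_1^{-1}+x_2+x_2^{-1}$ one has $y_0=0$ and $M_*=4$, and for $\lvert t\rvert=1/4$ the set $\{1-tf=0\}\cap T_0$ is empty or a single point, $(1,1)$ or $(-1,-1)$. It is not a set of positive length.

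The steepest-descent fallback leaves open precisely what has to be proved. When several points of $T_{y_0}$ attain $\lvert f\rvert=M_*$, minimality of $y_0$ only places $0$ in the convex hull of the $y$-gradients of $\log\lvert f\rvert$ at those points. So these points need not be critical points of $f$, and $T_{y_0}$ need not be a steepest-descent cycle. Moreover, excluding cancellation of the leading contributions for every $m$ is the content of the theorem, not a technicality.

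As written, your proposal establishes only the reduction to the interior case. That case needs the actual Duistermaat--van der Kallen argument or a genuine substitute.
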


\begin{corollary}[The torus Mathieu property]\label{cor:torus}
Every torus has the Mathieu property.
\end{corollary}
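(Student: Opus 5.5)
We must show: if $f\in\cR(T)$ satisfies $\cI_T(f^m)=0$ for all $m\geq1$, then for each fixed $h\in\cR(T)$ we have $\cI_T(hf^m)=0$ for all large $m$. Under the identification $\cR(T)\simeq\C[x_1^{\pm1},\ldots,x_d^{\pm1}]$, the functional $\cI_T$ is constant-term extraction, so the claim becomes purely about Laurent polynomials. If $f=0$ there is nothing to prove. For $f\neq0$, Theorem \ref{thm:dvdk} says that $0$ is not in the convex hull $N$ of the exponent support $S(f)\subset\mathbb Z^d$.

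Since $N$ is a compact convex polytope not containing $0$, I will separate it from the origin. Choose a linear functional $\ell:\R^d\to\R$ and $\varepsilon>0$ with $\ell(\gamma)\geq\varepsilon$ for every $\gamma\in S(f)$; this is the standard hyperplane separation of a point from a compact convex set. Every monomial of $f^m$ has exponent equal to a sum of $m$ elements of $S(f)$, so it satisfies $\ell\geq m\varepsilon$.

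Now fix $h$ and let $C=\max\{|\ell(\beta)|:\beta\in S(h)\}$, which is finite because $h$ has finite support. Every monomial in $hf^m$ has exponent $\beta+\gamma$, where $\beta\in S(h)$ and $\ell(\gamma)\geq m\varepsilon$, hence $\ell(\beta+\gamma)\geq m\varepsilon-C$. Once $m>C/\varepsilon$, this value is positive, so the exponent is not $0$. Thus $\CT(hf^m)=0$ for all $m>C/\varepsilon$, which is exactly the Mathieu condition of Definition \ref{def:mathieu-subspace}.

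All of the difficulty sits in Theorem \ref{thm:dvdk}, which we take as given. The only remaining point to check is that the identification of $\cR(T)$ with Laurent polynomials turns normalized Haar integration into constant-term extraction. This holds because characters are orthonormal and $\int_T\chi=0$ for every nontrivial character $\chi$. The case $d=0$ is trivial: then $T$ is a point and $\cI_T$ is the identity on $\C$, so $f^m=0$ forces $f=0$.
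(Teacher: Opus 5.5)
Your proposal is correct and follows essentially the same route as the paper: invoke Duistermaat--van der Kallen, strictly separate the exponent support of $f$ from the origin by a linear functional $\ell$, and use the finite support of $h$ to push every exponent of $hf^m$ to positive $\ell$-value for large $m$. The only difference is cosmetic: you bound with $\max|\ell|$ on the support of $h$ where the paper uses the minimum of $\ell$. Note that $h=0$ should be set aside separately, as the paper does, since the maximum over an empty support is undefined.
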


\begin{proof}
Let $f,h\in\C[x_1^{\pm1},\ldots,x_d^{\pm1}]$ and assume
$\CT(f^m)=0$ for every $m\geq1$.  If $f=0$ or $h=0$, the conclusion is immediate.
Assume $f\neq0$ and $h\neq0$.  By Theorem \ref{thm:dvdk}, a real linear
functional $\ell$ on $\R^d$ strictly separates the finite support of $f$ from
the origin.  Thus there is $\delta>0$ such that
$\ell(\alpha)\geq\delta$ for every exponent $\alpha$ in the support of $f$.
If $C$ is the minimum of $\ell$ on the finite support of $h$, then every
exponent in the support of $hf^m$ has $\ell$-value at least $C+m\delta$.
For all sufficiently large $m$ this is positive, so the zero exponent is not
in the support of $hf^m$.  Hence $\CT(hf^m)=0$ for all sufficiently large
$m$.
\end{proof}

\begin{proof}[Proof of Theorem \ref{thm:classification}]
For compact connected Lie groups, abelianity is equivalent to being a torus.
Corollary \ref{cor:torus} proves that every torus has the Mathieu property.
If $G$ is nonabelian, Theorem \ref{thm:uniform-nonabelian} produces
$P,Q\in\cR(G)$ with $P^m\in\ker\cI_G$ for every $m\geq1$ but
$QP^m\notin\ker\cI_G$ for any $m\geq1$.  Thus $\ker\cI_G$ is not a Mathieu
subspace.  This proves all equivalences.
\end{proof}

\section{The abelian reductions and an explicit transformed witness}

The universal conjectures of M\"uger--Tuset are already false indirectly:
by \cite[Remark~6.7(2)]{MugerTuset2025} they imply the Jacobian conjecture, so
the three-dimensional Jacobian counterexample discussed in the introduction
disproves them by contraposition.  There is also a direct admissible witness
obtained immediately from \cite[Theorem~2.1]{Long2026}.  Write
\[
 f_0(t,w)=(1-w^{-1})\bigl((1-t)+tw\bigr),
 \qquad
 \widetilde f(x,w)=f_0(x^2,w).
\]
The substitution $t=x^2$ and the moment identities in \cite{Long2026} give,
for every $m\geq1$,
\begin{align*}
 \int_0^1\CT_w\!\bigl(\widetilde f(x,w)^m\bigr)x\,\dd x
 &=0,\\
 \int_0^1\CT_w\!\bigl(w^{-1}\widetilde f(x,w)^m\bigr)x\,\dd x
 &=\frac{(-1)^{m-1}}{2(m+1)}\neq0.
\end{align*}
Thus $\widetilde f$, $w^{-1}$, and the admissible weight $\delta(x)=x$
already give a direct counterexample to Conjecture~6.3 at $N=M=1$; the
unchanged $w$-spectrum $\{-1,0,1\}$ also contradicts Conjecture~6.6.

The purpose of this section is different.  We identify the exact image of the
universal Hopf pair under the square-root-free M\"uger--Tuset coordinates for
$SU(2)$.  The resulting witness lies in the distinguished group-coordinate
algebra, uses the actual $SU(2)$ weight $\delta(x)=x$, and retains the full
positive Pascal marker tower.

\begin{proposition}[An explicit $SU(2)$ Laurent witness]
\label{prop:explicit-abelian-SU2}
Let $x\in[0,1]$ and let $w$ be a unit-circle variable.  Define
\[
 U=2x(1-x^2)w,
 \qquad
 V=2xw^{-1},
 \qquad
 T=1-2x^2,
\]
and
\begin{equation}\label{eq:explicit-abelian-pair}
 P_{\mathrm{ab}}=(1+U)\bigl(V-(2+U)T^2\bigr),
 \qquad
 Q_{\mathrm{ab}}=U.
\end{equation}
Then, for every $m\geq1$,
\begin{equation}\label{eq:explicit-abelian-pure}
 2\int_0^1 \CT_w\!\bigl(P_{\mathrm{ab}}^m\bigr)x\,\dd x=0,
\end{equation}
and, for every $s\geq1$,
\begin{equation}\label{eq:explicit-abelian-marker}
 2\int_0^1 \CT_w\!\bigl(Q_{\mathrm{ab}}^sP_{\mathrm{ab}}^m\bigr)x\,\dd x
 =c_m\binom{m-1}{s-1}.
\end{equation}
Moreover,
\begin{align}
 P_{\mathrm{ab}}
 ={}&2xw^{-1}-2(6x^4-6x^2+1)\notag\\
 &+6x(x^2-1)(2x^2-1)^2w\notag\\
 &-4x^2(x^2-1)^2(2x^2-1)^2w^2,
 \label{eq:explicit-abelian-expansion}
\end{align}
so its $w$-spectrum is exactly
\begin{equation}\label{eq:explicit-abelian-spectrum}
 \operatorname{Sp}_w(P_{\mathrm{ab}})=\{-1,0,1,2\}.
\end{equation}
\end{proposition}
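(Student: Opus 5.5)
The plan is to show that $(U,V,T)$ is a re-gauged copy of the Hopf coordinates $(u,v,\tau)$ on $S^3$, that the operator $2\int_0^1\CT_w(\cdot)\,x\,\dd x$ reproduces $\int_{S^3}(\cdot)\,\dd\sigma$ on polynomials in $u,v,\tau$, and then to invoke Corollary~\ref{cor:sphere-marker-tower}. The first observation is that on $S^3$ (where $a=1$) the universal pair restricts to
\[
 p=(1+u)\bigl(v-(2+u)\tau^2\bigr),\qquad q=u,
\]
so $P_{\mathrm{ab}}$ and $Q_{\mathrm{ab}}$ are obtained from $p,q$ by the substitution $(u,v,\tau)\mapsto(U,V,T)$. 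The new coordinates obey the same Hopf relation:
\[
 UV=4x^2(1-x^2)=1-(1-2x^2)^2=1-T^2.
\]

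\textbf{Key lemma.} For any polynomial $F\in\C[u,v,\tau]$,
\[
 2\int_0^1\CT_w\bigl(F(U,V,T)\bigr)\,x\,\dd x=\int_{S^3}F(u,v,\tau)\,\dd\sigma.
\]
To prove it, expand $F$ into monomials $u^iv^j\tau^k$.
\begin{itemize}
\item On the left, the substituted monomial has $w$-degree $i-j$, so $\CT_w$ keeps exactly the terms with $i=j$. For these, $U^iV^iT^k=(1-T^2)^iT^k$.
\item On the right, use the Hopf coordinates \eqref{eq:s3-measure} with $u=\sqrt{1-t^2}\,e^{i\theta}$ and $v=\overline u$. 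The phase average also kills every monomial with $i\neq j$, and the surviving ones give $(1-t^2)^it^k$.
\end{itemize}
So both sides reduce to the same one-variable polynomial $g(t)=\sum_{i,k}f_{iik}(1-t^2)^it^k$. It remains to match the measures. The proof of Theorem~\ref{thm:hopf-coefficient} shows that the right side equals $\tfrac12\int_{-1}^1 g(t)\,\dd t$. The substitution $t=1-2x^2$ gives $\dd t=-4x\,\dd x$ and maps $[0,1]$ onto $[-1,1]$, so $2\int_0^1 g(1-2x^2)\,x\,\dd x=\tfrac12\int_{-1}^1 g(t)\,\dd t$. The key point is that the weight $x(1-x^2)$ in $U$ is an asymmetric gauge factor $U=\lambda u$, $V=\lambda^{-1}v$. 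It is invisible to the constant term because only balanced monomials survive, and this is exactly what removes the square root.

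\textbf{Moment identities.} Apply the lemma with $F=p^m$ and $F=q^sp^m$. Then \eqref{eq:explicit-abelian-pure} and \eqref{eq:explicit-abelian-marker} follow directly from \eqref{eq:sphere-pure} and \eqref{eq:sphere-marked}.

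\textbf{Expansion and spectrum.} Expand $(1+U)(V-2T^2-UT^2)$ and use $UV=4x^2(1-x^2)$ to collect by powers of $w$. The $w^0$ coefficient is $4x^2(1-x^2)-2T^2$, which should simplify to $-2(6x^4-6x^2+1)$. Collecting the $w^1$ terms $-2T^2U-UT^2$ and the $w^2$ term $-U^2T^2$ gives the stated coefficients after substituting $U$ and $T$. Each of the four coefficients is a nonzero polynomial in $x$, which yields \eqref{eq:explicit-abelian-spectrum}.

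The only delicate step is the measure bookkeeping in the key lemma: one has to check that the factor $\tfrac12$ and the map $[0,1]\to[-1,1]$ match the normalization used in the proof of Theorem~\ref{thm:hopf-coefficient}. The rest is routine algebra.
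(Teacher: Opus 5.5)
Your proposal is correct and follows essentially the paper's route: the paper also uses $UV+T^2=1$, the substitution $t=1-2x^2$ (so $2x\,\dd x=-\tfrac12\dd t$), and $w$-constant-term extraction to reduce to the Hopf computation, and it verifies the expansion by direct multiplication. The only difference is packaging. The paper feeds the defect-one identity $UP_{\mathrm{ab}}=(1+U)\bigl(1-(1+U)^2T^2\bigr)$ into the constant-term calculation inside the proof of Theorem~\ref{thm:hopf-coefficient}. Your monomial-by-monomial lemma instead identifies $2\int_0^1\CT_w(\cdot)\,x\,\dd x$ with $\int_{S^3}$ on all of $\C[u,v,\tau]$ and then cites Corollary~\ref{cor:sphere-marker-tower} as a black box, which is slightly cleaner and never divides by $U$.
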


\begin{proof}
One has $UV+T^2=1$.  Consequently,
\[
 UP_{\mathrm{ab}}
 =(1+U)\bigl(1-(1+U)^2T^2\bigr),
\]
which is the same defect-one identity as \eqref{eq:no-division-identity}.
The change of variables $t=1-2x^2$ gives
$2x\,\dd x=-\tfrac12\dd t$.  Constant-term extraction in $w$, followed by
evenness in $t$, therefore reduces \eqref{eq:explicit-abelian-pure} and
\eqref{eq:explicit-abelian-marker} exactly to the calculation in Theorem
\ref{thm:hopf-coefficient}.  Direct expansion gives
\eqref{eq:explicit-abelian-expansion}; each of its four displayed coefficient
polynomials is nonzero, proving \eqref{eq:explicit-abelian-spectrum}.

We now identify this pair with the M\"uger--Tuset transform without retaining
any conjugate variables.  Write
\[
 g=\begin{pmatrix}a&b\\ c&d\end{pmatrix}\in SU(2).
\]
Since $\overline a=d$ and $\overline c=-b$ on $SU(2)$, the Hopf quantities of
the first column have the following polynomial representatives in
$\C[a,b,c,d]$:
\begin{equation}\label{eq:polynomial-entry-hopf}
 A_0=ad-bc,
 \qquad U_0=-2ab,
 \qquad V_0=2cd,
 \qquad T_0=ad+bc.
\end{equation}
Thus the restrictions to $SU(2)$ of the universal pair are represented by
\begin{equation}\label{eq:polynomial-entry-pair}
 \widehat P=(A_0+U_0)\bigl(A_0^2V_0-(2A_0+U_0)T_0^2\bigr),
 \qquad
 \widehat Q=U_0,
\end{equation}
which are genuine polynomials in the four matrix entries.

Multiplication by the maximal-torus factor
$\psi(z)=\operatorname{diag}(z,z^{-1})$ sends
\[
 (a,b,c,d)\longmapsto(az,bz^{-1},cz,dz^{-1}).
\]
Every expression in \eqref{eq:polynomial-entry-hopf} is unchanged, so the
variable $z$ cancels.
Under the square-root-free substitution of
\cite[Lemma~5.2]{MugerTuset2025},
\[
 (a,b,c,d)
 =\bigl(iw(1-x^2),\,ix,\,ix,\,-iw^{-1}\bigr),
\]
one obtains
\[
 A_0\longmapsto1,
 \qquad U_0\longmapsto U,
 \qquad V_0\longmapsto V,
 \qquad T_0\longmapsto T.
\]
Hence \eqref{eq:polynomial-entry-pair} maps exactly to
\eqref{eq:explicit-abelian-pair}.  This is the polynomial-entry substitution
required by M\"uger--Tuset: no conjugation variable survives.  In the proof
of Proposition~5.3 of \cite{MugerTuset2025}, they also note that the
substituted polynomial is independent of the chosen polynomial
representative.  After normalizing
the circle integrals as constant terms and Haar measure to have total mass
one, their Proposition~5.3 gives precisely the functional
$2\int_0^1\CT_w(\cdot)x\,\dd x$, in agreement with
\eqref{eq:explicit-abelian-pure} and
\eqref{eq:explicit-abelian-marker}.
\end{proof}

\begin{corollary}[Failure of the universal abelian conjectures]
\label{cor:abelian-reductions}
The universal moment conjecture and the universal convex-support conjecture of
M\"uger--Tuset \cite[Conjectures~6.3 and~6.6]{MugerTuset2025} are false.  The
stronger growth assertion proposed in
\cite[Remark~6.7(1)]{MugerTuset2025} is also false.  More generally, every
auxiliary abelian conjecture in
\cite{Zwart2023,Zwart2024,Zwart2025} that is proved there to imply the Mathieu
conjecture for a specified nonabelian compact connected group is false for
that group.
\end{corollary}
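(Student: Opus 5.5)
The approach is to show that Proposition~\ref{prop:explicit-abelian-SU2} directly contradicts each of the cited assertions. Then, for the Zwart conjectures, we invoke contraposition through their proven implications together with Corollary~\ref{cor:simple-central-forms} or Theorem~\ref{thm:uniform-nonabelian}.

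\textbf{Step 1: the universal moment conjecture.} First I will recall the shape of \cite[Conjecture~6.3]{MugerTuset2025}. For Laurent polynomials whose coefficients are polynomials in the abelian parameter $x$, and for an admissible weight $\delta$, it asserts the following: if all weighted constant-term moments $\int\CT_w(f^m)\delta$ vanish, then $\int\CT_w(hf^m)\delta$ vanishes for all large $m$. I take $N=M=1$, $\delta(x)=x$, $f=P_{\mathrm{ab}}$ and $h=Q_{\mathrm{ab}}$. Equation~\eqref{eq:explicit-abelian-pure} gives the hypothesis. Equation~\eqref{eq:explicit-abelian-marker} with $s=1$ gives $c_m>0$ for every $m$, so the conclusion fails. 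I must check that $P_{\mathrm{ab}},Q_{\mathrm{ab}}$ lie in the admissible coefficient class, that is, polynomial in $x$ and Laurent in $w$. The expansion~\eqref{eq:explicit-abelian-expansion} shows this. The alternative witness $\widetilde f,w^{-1}$ from \cite{Long2026}, displayed before the proposition, serves equally well.

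\textbf{Step 2: the convex-support conjecture and the growth assertion.} Conjecture~6.6 predicts that vanishing moments force $0\notin$ the convex hull of the $w$-spectrum. This is the DvdK-type conclusion of Theorem~\ref{thm:dvdk}, transported to the parameter setting. By~\eqref{eq:explicit-abelian-spectrum} the spectrum of $P_{\mathrm{ab}}$ is $\{-1,0,1,2\}$, whose convex hull $[-1,2]$ contains $0$. Moreover, $P_{\mathrm{ab}}$ satisfies~\eqref{eq:explicit-abelian-pure}, so Conjecture~6.6 is contradicted.

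For Remark~6.7(1), I expect the assertion to be a strengthening: the marked moments should vanish, or decay at some prescribed rate, beyond a threshold linear in $m$. Here the tower~\eqref{eq:explicit-abelian-marker} is useful. For $1\le s\le m$ the moments equal $c_m\binom{m-1}{s-1}>0$, and $c_m\sim \tfrac12\sqrt{\pi/m}$ decays only polynomially. This should contradict any such growth or decay bound. This step is the main obstacle, because I must match the precise formulation in \cite{MugerTuset2025}. I would first test the $s=1$ case, then the Pascal row $s=\lfloor m/2\rfloor$, whose value grows exponentially in $m$.

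\textbf{Step 3: the Zwart conjectures.} Each auxiliary conjecture in \cite{Zwart2023,Zwart2024,Zwart2025} is proved there to imply the Mathieu property for a specified nonabelian compact connected group $G$, for example $SU(N)$, $SO(N)$, $Sp(N)$ or $G_2$. Theorem~\ref{thm:classification}, or Theorem~\ref{thm:uniform-nonabelian}, shows that $G$ fails the Mathieu property. By contraposition, the conjecture is false for $G$. No explicit witness is needed here. The only point to verify is that each implication is stated for a connected nonabelian group, which is exactly the hypothesis of Theorem~\ref{thm:uniform-nonabelian}.
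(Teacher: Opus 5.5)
Your Steps~1 and~3 match the paper's argument. So does the Conjecture~6.6 part of Step~2; the paper notes that $0\in\operatorname{Sp}_w(P_{\mathrm{ab}})$ itself, which refutes the convex-support conclusion under either reading.

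\textbf{The gap is your treatment of Remark~6.7(1) of M\"uger--Tuset.} You guess that it bounds \emph{marked} moments (vanishing or decay beyond a threshold), and you plan to refute it with the Pascal tower. The assertion the paper refutes is different: it is a positive exponential-growth statement about the \emph{pure} moments. Roughly, when $0$ lies in the convex hull of the $w$-spectrum of $f$, it predicts $\limsup_{m\to\infty}\bigl|\int_0^1\CT_w(f^m)\,\delta\bigr|^{1/m}>0$. Nothing in your Step~2 addresses this.
\begin{itemize}
\item The polynomial decay of $c_m$ carries no information about the growth rate of the pure moments.
\item The exponential growth of $c_m\binom{m-1}{\lfloor m/2\rfloor-1}$ is likewise silent about the pure moments, and exponential growth is what a positive-growth assertion predicts rather than forbids.
\item A family $\int Q_{\mathrm{ab}}^{s}P_{\mathrm{ab}}^{m}$ with $s$ tied to $m$ is not a moment with a fixed multiplier, so no statement of the M\"uger--Tuset type constrains it.
\end{itemize}

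The missing step is short and uses no marked moments. By \eqref{eq:explicit-abelian-spectrum}, $0\in\operatorname{Sp}_w(P_{\mathrm{ab}})$, so the hypothesis of the growth assertion is met. By \eqref{eq:explicit-abelian-pure}, every pure moment vanishes. Hence $\limsup_{m\to\infty}\bigl|2\int_0^1\CT_w(P_{\mathrm{ab}}^m)\,x\dd x\bigr|^{1/m}=0$, contradicting the predicted positive growth rate. Replace the marked-moment discussion in Step~2 with this observation, and the proposal is complete.
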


\begin{proof}
Up to the harmless overall factor $2$, the functional in Proposition
\ref{prop:explicit-abelian-SU2} is the M\"uger--Tuset functional
$\int_{(1,1)}(\cdot)\,x$.  Taking
$f=P_{\mathrm{ab}}$, $g=Q_{\mathrm{ab}}$, and $\delta(x)=x$, equations
\eqref{eq:explicit-abelian-pure} and
\eqref{eq:explicit-abelian-marker} with $s=1$ give vanishing pure moments but
nonzero $g$-marked moments for every $m\geq1$.  This directly disproves
Conjecture~6.3.  Equation \eqref{eq:explicit-abelian-spectrum} gives
$0\in\operatorname{Sp}_w(P_{\mathrm{ab}})$, hence directly disproves
Conjecture~6.6.  At the same time,
\[
 \limsup_{m\to\infty}
 \left|2\int_0^1\CT_w\!\bigl(P_{\mathrm{ab}}^m\bigr)x\,\dd x\right|^{1/m}
 =0,
\]
which contradicts the stronger positive-growth assertion in Remark~6.7(1).
The statements in Zwart's papers follow by contraposition from the implication
theorems proved there and Theorem \ref{thm:classification}.
\end{proof}

\begin{remark}[The mixed case $N=M=1$]
For the actual $SU(2)$ reduction, M\"uger--Tuset use $N=1$, $M=2$, and
$\delta(x)=x$; the additional Laurent variable is the maximal-torus variable
$z$.  The pair in Proposition \ref{prop:explicit-abelian-SU2} lies in their
algebra $\mathcal A(SU(2))$ and is independent of $z$.  Deleting this unused
variable places it at $N=M=1$, the smallest case containing both a polynomial
variable and a Laurent variable.  The quadratic reparametrization at the
beginning of this section gives another admissible witness at the same pair
$(N,M)$.  What distinguishes Proposition \ref{prop:explicit-abelian-SU2} is
its origin as the exact transform of the universal Hopf pair and its complete
positive Pascal marker tower.  By \cite[Remark~6.4]{MugerTuset2025}, the case
$N=0$ is the torus theorem of Duistermaat--van der Kallen, while only partial
results were available for $N=1$, $M=0$.
\end{remark}

\section{Further remarks}

\begin{remark}[Strength of the failure]
For every nonabelian compact connected $G$, the failure is uniform in the
exponent: the same fixed multiplier $Q$ detects every positive power of $P$.
The full Pascal marker tower \eqref{eq:uniform-marker-tower} holds on $G$
itself, not merely on a simply connected simple cover.
\end{remark}

\begin{remark}[No classification by Lie type]
The proof uses only one simple root and its highest-weight-one doublet.  It
therefore handles the classical and exceptional Lie types uniformly, as well
as all simply connected, adjoint, and intermediate central forms.  Products,
torus extensions, and finite central identifications are absorbed by the
adjoint-simple-quotient argument.
\end{remark}

\begin{remark}[Scope]
The theorem classifies Mathieu's original compact connected group problem.
It does not by itself classify disconnected compact groups, noncompact groups,
or homogeneous spaces $G/H$ with $H$ nonnormal; those require separate
formulations or additional transfer arguments.
\end{remark}

\section*{AI-assisted research disclosure}

The author used ChatGPT 5.6 Sol Pro (OpenAI) and Claude Opus 5 (Anthropic) for
exploratory calculations, proof organization, literature discovery, and
language editing.  The author reviewed and checked the mathematical
arguments, calculations, citations, and final text and assumes sole
responsibility for the contents.  Neither AI system is an author.

\end{document}